\numberwithin{equation}{section}
\newtheorem{lemma}{Lemma}[section]
\newtheorem{theorem}[lemma]{Theorem}
\newtheorem{proposition}[lemma]{Proposition}
\newtheorem{definition}[lemma]{Definition}
\newtheorem{corollary}[lemma]{Corollary}
\newtheorem{example}[lemma]{Example}
\newtheorem{exercise}[lemma]{Exercise}
\newtheorem{remark}[lemma]{Remark}
\newtheorem{fig}[lemma]{Figure}
\newtheorem{tab}[lemma]{Table}
\newcommand{\bth}{\begin{theorem}}
\newcommand{\ethe}{\end{theorem}}
\newcommand{\bre}{\begin{remark}\em }
\newcommand{\ere}{\end{remark}}
\newcommand{\ble}{\begin{lemma}}
\newcommand{\ele}{\end{lemma}}
\newcommand{\bde}{\begin{definition}}
\newcommand{\ede}{\end{definition}}
\newcommand{\bco}{\begin{corollary}}
\newcommand{\eco}{\end{corollary}}
\newcommand{\bpr}{\begin{proposition}}
\newcommand{\epr}{\end{proposition}}
\newcommand{\bexer}{\begin{exercise}}
\newcommand{\eexer}{\end{exercise}}
\newcommand{\bexam}{\begin{example}\rm  }
\newcommand{\eexam}{ \end{example}}
\newcommand{\bfi}{\begin{fig}}
\newcommand{\efi}{\end{fig}}
\newcommand{\btab}{\begin{tab}}
\newcommand{\etab}{\end{tab}}
\def\E{{\mathbb{E}}}
\def\P{{\mathbb{P}}}
\def\R{{\mathbb{R}}}
\def\N{{\mathbb{N}}}
\def\Z{{\mathbb{Z}}}
\def\s{\sigma}
\def\B_e{B_{\eta}(e)}
\renewcommand{\a}{\alpha }
\renewcommand{\b}{\beta}
\renewcommand{\d}{\delta}
\newcommand{\1}{{\bf 1}}
\newcommand{\8}{\infty}
\newcommand{\ov}{\overline}
\newcommand{\wt}{\widetilde}
\newcommand{\blue}{\color{blue} }
\definecolor{darkblue}{rgb}{0,0,1}
\definecolor{darkgreen}{rgb}{0,1,0}
\definecolor{darkred}{rgb}{1, 0,0}
\newcommand{\garch}{{\rm GARCH}$(1,1)$}
\newcommand{\sta}{St\u aric\u a}
\newcommand{\bfR}{{\bf R}}
\newcommand{\rv}{random variable}
\newcommand{\sign}{{\rm sign}}
\newcommand{\bfTh}{\mbox{\boldmath$\Theta$}}
\newcommand{\bfPi}{\mbox{\boldmath$\Pi$}}
\newcommand{\beao}{\begin{eqnarray*}}
\newcommand{\eeao}{\end{eqnarray*}\noindent}
\newcommand{\beam}{\begin{eqnarray}}
\newcommand{\eeam}{\end{eqnarray}\noindent}
\newcommand{\beqq}{\begin{equation}}
\newcommand{\eeqq}{\end{equation}\noindent}
\newcommand{\bce}{\begin{center}}
\newcommand{\ece}{\end{center}}
\newcommand{\barr}{\begin{array}}
\newcommand{\earr}{\end{array}}
\newcommand{\stw}{\stackrel{w}{\rightarrow}}
\newcommand{\vague}{\stackrel{\lower0.2ex\hbox{$\scriptscriptstyle
                    \it{v} $}}{\rightarrow}}
\newcommand{\weak}{\stackrel{\lower0.2ex\hbox{$\scriptscriptstyle
                    \it{w} $}}{\rightarrow}}
\newcommand{\what}{\stackrel{\lower0.2ex\hbox{$\scriptscriptstyle
                    \it{\hat{w}} $}}{\rightarrow}}
\newcommand{\bdis}{\begin{displaymath}}
\newcommand{\edis}{\end{displaymath}\noindent}
\newcommand{\xto}{x\to\infty}
\newcommand{\regvary}{regularly varying}
\newcommand{\bbz}{{\mathbb Z}}
\newcommand{\bbs}{{\mathbb S}}
\newcommand{\con}{convergence}
\newcommand{\ds}{distribution}
\newcommand{\seq}{sequence}
\newcommand{\bfx}{{\bf x}}
\newcommand{\bfX}{{\bf X}}
\newcommand{\bfB}{{\bf B}}
\newcommand{\bfN}{{\bf N}}
\newcommand{\bfY}{{\bf Y}}
\newcommand{\bfy}{{\bf y}}
\newcommand{\bfA}{{\bf A}}
\newcommand{\bfO}{{\bf 0}}
\newcommand{\bfZ}{{\bf Z}}
\newcommand{\bfW}{{\bf W}}
\newcommand{\bfS}{{\bf S}}
\newcommand{\bfI}{{\bf I}}
\newcommand{\bali}{\begin{align}}
\newcommand{\eali}{\end{align}}
\renewcommand\d{{\mathrm d}}
\begin{document}

\bibliographystyle{alpha}
\title[Solutions to bivariate stochastic recurrence equations with
different tail indices]
      {Componentwise different
      tail solutions for bivariate stochastic recurrence equations \\-- with application to \garch\ processes --}
\today
\author[E. Damek]{Ewa Damek${}^\ast$}
\author[M. Matsui]{Muneya Matsui${}^\dagger$}
\author[W. Swiatkowski]{\foreignlanguage{polish}{Witold "Swi"atkowski}${}^\ast$ \vspace{2mm}\\
 {\scriptsize  University of \foreignlanguage{polish}{Wroclaw}${}^\ast$ and Nanzan University${}^\dagger$} }
\address{Department of Business Administration, Nanzan University,
18 Yamazato-cho Showa-ku Nagoya, 466-8673, Japan}
\email{mmuneya@nanzan-u.ac.jp}
\address{
Institute of Mathematics
University of \foreignlanguage{polish}{Wroclaw} 
Pl.  Grunwaldzki 2/450-384, Wroclaw, Poland}  
\email{edamek@math.uni.wroc.pl}
\email{witt.r@wp.pl}
 
\begin{abstract} 
 We study bivariate stochastic recurrence equations (SREs) motivated by
 applications to \garch\ processes. If coefficient matrices of SREs
 have strictly positive entries, then the Kesten result applies and it gives 
 solutions with regularly varying tails. Moreover, the tail indices are
 the same for all coordinates. However, for applications, this framework
 is too restrictive. We study SREs when coefficients are triangular matrices and prove that 
 the 
 coordinates of the solution may exhibit regularly varying tails   
 with different indices. We also specify each tail
 index together with its constant. The results are used to
 characterize regular variations of bivariate stationary \garch\ 
 processes. 
\vspace{2mm} \\
{\it Key words. }\ Regular variation, bivariate \garch , Kesten's
 theorem, stochastic recurrence equation. 
\end{abstract}
\subjclass[2010]{Primary 60G70, 62M10, Secondary 60H25,91B84}
\thanks{Ewa Damek's research was partly supported by the NCN grant
UMO-2014/15/B/ST1/00060. \\
\quad Muneya Matsui's research is partly supported by the JSPS Grant-in-Aid
for Young Scientists B (16k16023).
}
\maketitle 

\section{Introduction}
We consider the stochastic recurrence equation (SRE)
\begin{equation}\label{affine} 
\bfW_t = \bfA_t\bfW_{t-1}+\bfB_t, \quad t\in \N,
\end{equation}
where $(\bfA_t,\bfB_t)$ is an i.i.d.\ sequence, $\bfA_t$ are $d\times d$
 matrices, $\bfB_t$ are vectors and $\bfW_0$ is an initial distribution
 independent of the sequence $(\bfA_t, \bfB_t)$. Iterations
 \eqref{affine} generate a Markov chain $(\bfW_t)_{t\geq0}$ that is not necessarily stationary.
 Under mild contractivity hypotheses (see e.g. \cite{bougerol:picard:1992a, brandt:1986})
the sequence $\bfW_t$ converges in law to a random
variable $\bfW$ that is the unique solution of the equation
\begin{equation}
\label{dif recurrence} \bfW \stackrel{d}{=} \bfA \bfW+\bfB, \end{equation} where $\bfW$ is independent
of $(\bfA,\bfB)$ 
and the equation is meant in law. Here $(\bfA,\bfB)$ is a generic element of the sequence $(\bfA_t,\bfB_t)$.
If we put $\bfW_0=\bfW$ then the chain $\bfW_t$ becomes
stationary. Moreover, extending the set of indices to $\Z$
and taking an i.i.d.\ sequence $(\bfA_t, \bfB_t)_{t\in \Z}$ we can have a strictly stationary causal solution $\bfW_t$ to the equation
\begin{equation}\label{causal}
\bfW_t = \bfA_t\bfW_{t-1}+\bfB_t, \quad t\in \Z . 
\end{equation}
It is given by
\begin{equation*}
\bfW _t=\sum _{i=-\8}^t\bfA_t\cdots \bfA_{i}\bfB_{i-1}+\bfB_i\stackrel{d}{=}\bfW.
\end{equation*} 
There is considerable interest in studying various aspects of the
iteration \eqref{affine} and, in particular, the tail behaviour of
$\bfW$. The story started with Kesten \cite{kesten:1973} who
obtained fundamental results about tails of $\bfW_t$ in the case of
matrices $\bfA_t$ having non-negative entries.

Given $y=(y_1,\dots y_d)$ in the unit sphere $\mathbb{S}^{d-1}$, let
\begin{equation*}
y'\bfW=\sum _{j=1}^dy_jW_j,\quad \bfW =(W_1,\dots W_d).
\end{equation*}
Under appropriate assumptions Kesten \cite{kesten:1973}
proved that there is $\a >0$ and a function $e_{\a }$ on $\mathbb{S}^{d-1}$ such that
\begin{equation}\label{rege}
\lim _{x\to \8 }x^{\a }\P (y'\bfW>x)=e_{\a }(y),\quad y\in \mathbb{S}^{d-1}
\end{equation}
and $e_{\a }(y) > 0$ for $ y\in \mathbb{S}^{d-1}\cap [0,\8
)^d$. Later on an analogous result was proved by
Alsmeyer and Mentemeier \cite{alsmeyer:mentmeier2012} for invertible
matrices $\bfA$ with some irreducibility and density conditions.

 The density assumption was removed by Guivarc'h and Le
Page \cite{guivarch:lepage:2015} who developed the most general approach
to \eqref{affine} with signed $\bfA$ having possibly a singular
law. Moreover, their conclusion was stronger i.e.\ they obtained
existence of a measure $\mu $ on $\R ^d $ being the week limit of 
\begin{equation}
\label{regular}
 x^{\a}\P (x^{-1}\bfW\in \cdot ) \quad \mbox{when}\ x\to \infty .
\end{equation}
The latter means regular variation of $\bfW$.  
\footnote{If $\a \notin \N$
then \eqref{rege} implies regular variation of $\bfW$. 
If $\alpha\in \N$, the same holds with some additional conditions 
(see
\cite[Appendix C]{buraczewski:damek:mikosch:2016}). For more on regular
variation, we refer to Bingham et al. \cite{bingham:goldie:teugels:1987}
and Resnick \cite{resnick:1987,resnick:2007} in the univariate and
multivariate cases, respectively. }
and it was also proved also for \eqref{affine} with $\bfA$ being similarities
\cite{buraczewski:damek:guivarch2009}\footnote{$A$ is a similarity if
for every $x\in \bfR ^d$, $|Ax|=\| A\|\ |x|$.} i.e.\ when neither assumptions of
\cite{guivarch:lepage:2015} nor \cite{alsmeyer:mentmeier2012} are
satisfied. See \cite{buraczewski:damek:mikosch:2016} for an elementary
explanation of Kesten's result and other results mentioned above.

For all the matrices considered above we have the same
tail behavior in all directions, one of the reasons being a certain
irreducibility or homogeneity of the action of the group generated by
the support of the law of $\bfA$. But it does not always have to be like that. We may
imagine $\bfA=diag (A_{11},\dots A_{dd})$ being diagonal such that $\E
A_{ii}^{\a _i}=1$ and $\a _1, \dots \a _d$ are different (see
e.g.\ \cite{buraczewski:damek:2010},
\cite{buraczewski:damek:guivarch2009} and 
\cite[Appendix D]{buraczewski:damek:mikosch:2016}
). Then $W_1,\dots W_d$
are regularly varying with different exponents $\a _1,\dots \a _d$. In
such a case, if we want to say that $\bfW$ is regularly varying we need to
modify the notion. 
For more detailed explanation we refer to 
\cite[Chapter 4]{buraczewski:damek:mikosch:2016} as
well as the book by Resnick \cite[p.
203]{resnick:2007}, where non-standard regular variation appears in various contexts. 

Triangular matrices $\bfA$ do not fit into any of
the frameworks mentioned above and therefore considering them is a natural next step. However the existing methods cannot be applied and a new approach is needed. 
This is what we do here. We study $2\times 2$ upper triangular matrices
$\bfA=[A_{ij}]$ with positive entries 
(i.e.\ $A_{21}$ is the only one being zero) such that $\E  A_{ii}^{\a _i}=1$ and $\a _1\neq \a _2$. 
We prove that
\begin{itemize}
\item if $\a _1 >\a _2$ then $\bfW=(W_{1}, W_{2})$ is regularly varying with index $\a _2$.
\item if $\a _2 >\a _1$ then $W_{1}$ and $W_{2}$ are regularly varying with indices $\a _1$ and $\a _2$ respectively.
\end{itemize}
This is the content of Theorem \ref{thm:mainresult}. Then we study regular variation of $\bfW_t$ as a time series. In the first case we describe the spectral process $\bfY _t$ in the sense of \cite{basrak:segers:2009} corresponding to $\bfW _t$. It is of the form
\begin{equation}\label{spectral}
\bfY _t=\bfA _t\cdots \bfA _1\bfY _0,\end{equation}
where $\bfY _0 =\| \bfY_0\| {\mathbf \Theta}_0$, $\P (\| \bfY_0\|
>u)=u^{-\a },\,u\ge1$ and the law of $\mathbf{\Theta}_0$ is the spectral measure
of $\bfW $, see Proposition \ref{prop:fidm2}. 
In the second case we consider $W_{1,t}$ and $W_{2,t}$ separately (Lemma  \ref{lem:differenttail}).

Our results are interesting from the point of view of financial analysis and they apply to the squared volatility sequence $\bfW _t=(\s ^2_{1,t}, \s
^2_{2,t})$ of the bivariate GARCH(1,1) financial model, see Section
\ref{aplication}. Then $\bfW_t$ satisfies \eqref{affine} with matrices
$\bfA_t$ having non-negative entries. If all the entries of $\bfA_t$ are
strictly positive then the theorem of Kesten applies and both $\s
^2_{1,t}$ and $\s ^2_{2,t}$ are regularly varying with the same
index, see \cite{matsui:mikosch:2016}, \cite{mikosch:starica:2000}. But if this is not the case then we have to go beyond Kesten's
result and Theorem \ref{thm:mainresult} below enters into the picture. From the point of view of applications it is reasonable to relax
the assumptions on $\bfA_t$ because it allows us to capture a larger class of financial models.

When matrices $\bfA_t$ are upper triangular we may apply the above results to
obtain regular variation of $\s ^2_{1,t}$ and $\s ^2_{2,t}$. Namely if
$\a _1< \a _2$ then $\s ^2_{1,t}$ and $\s ^2_{2,t}$ are regularly varying
with indices $\a _1$ and $\a _2$ respectively. If $\a _1> \a _2$
then $(\s ^2_{1,t}, \s ^2_{2,t})$ is regularly varying with the index $\a
_2$ and we have a nice description of its spectral
process. Finally, in Propositions \ref{prop:spectral1} and \ref{spectralgarch} we study regular variation of the bivariate
GARCH(1,1) itself $\bfX _t=(\s_{1,t}Z_{1,t}, \s_{2,t}Z_{2,t})$.

 It turns out that the appearance of triangular matrices in
 \eqref{affine} generates a lot of technical complications, it is
 challenging and it is far from being solved in arbitrary dimension.  
Even for $2\times 2$ matrices, the case when $\a _1= \a _2$ is, in our
opinion, out of reach in full generality at the moment. There is a preprint
\cite{damek:zienkiewicz:2017} on that case with an extra
assumption that $A_{11}=A_{22}$. 

\section{Bivariate stochastic recurrence equations}

We start with the description of the model as well as 
conditions for stationarity of the related time series.
\subsection{The model}
We consider the bivariate SRE;
\begin{align}\label{bivSRE}
 \bfW_t =\bfA_t \bfW_{t-1}+\bfB_t, \quad t \in \Z, 
\end{align}
where 
\beam
\bfW_t= \left(
\barr{l} W_{1,t}  \\ W_{2,t}   \earr\right),\quad
\bfA_t=\left(\barr{cc} A_{1,t} & A_{2,t} \\
0 & A_{4,t}
\earr\right)\quad \mathrm{and}\quad \bfB_t= \left(
\barr{l} B_{1,t}  \\ B_{2,t}   \earr\right), 
\eeam
and an i.i.d. matrix sequence $(\bfA_t)$ and an i.i.d. vector sequence
$(\bfB_t)$. 
We assume $A_{i,t}>0$ a.s.\ $i=1,2,4$ and
$B_{i,t}>0$ a.s.\ $i=1,2.$ For our purpose, it is convenient to
write the SRE in a coordinate-wise form; 
\begin{align}
 W_{1,t} & = 
 A_{1,t}W_{1,t-1} + D_t, \label{uniSRE1} \\
 W_{2,t} & = A_{4,t} W_{2,t-1} + B_{2,t}, \label{uniSRE2} 
\end{align}
where $D_t:=B_{1,t}+A_{2,t}W_{2,t-1}.$
We sometimes omit the subscript $0$ in $A_{i,0}$, $B_{i,0}$ and
$W_{i,0}$, 
etc.\ and just write
$A_{i}$, $B_{i}$ and $W_{i}$ if they are stationary. 
For further convenience we denote
for $t\in\mathbb{Z}$
\begin{align*}
  \bfPi_{t,s} & =\bfA_t\cdots \bfA_s,\,t\ge s,\quad
 \bfPi_{t,s}=\bfI,\,t<s\quad \mathrm{and}\quad \bfPi_t =\bfPi_{t,1}, \\
  \Pi_{t,s}^{(i)}& =\prod_{j=s}^t A_{i,j},\,t\ge s,\,i=1,2,4\quad
 \mathrm{and}\quad \Pi_{t,s}^{(i)}=1,\,t<s\quad \mathrm{and}\quad \Pi_t^{(i)}=\Pi_{t,1}^{(i)},
\end{align*}
where $\bfI$ is the bivariate identity matrix. 
For a vector $\bfx\in \R^d$, $|\bfx|$ denotes its Euclidean norm and
for a $d\times d$ matrix $\bfA$ we use the matrix norm;
\[
 ||\bfA||=\sup_{\bfx\in\R^d,\,|\bfx|=1}|\bfA \bfx|. 
\]
\subsection{Stationarity}
Starting from Kesten \cite{kesten:1973} there is a series of results \cite{brandt:1986,bougerol:picard:1992a}
for the
existence of stationary solution for SRE (see also \cite{buraczewski:damek:mikosch:2016}). 
The notion of the ``so called '' {\em top Lyapunov exponent} 
\beao
\gamma=\inf_{n\ge 1} n^{-1} \E \log \| 
\bfPi_{n}\| 
\eeao
associated with the \seq\ $(\bfA_t)$ is always essential.
If $\gamma$ is negative and some logarithmic moment conditions are satisfied, then SRE
 \eqref{bivSRE} has a unique strictly stationary solution
 (\cite{bougerol:picard:1992a} or 
 \cite[Theorem 4.1.4]{buraczewski:damek:mikosch:2016}). 

In our setting, $\gamma <0$ if there is $\varepsilon>0$ such that
\beam\label{epsilon_moment}
\E A_1^\varepsilon<1,\quad \E A_4^\varepsilon<1\quad \mbox{and}\quad\E
A_2^\varepsilon<\infty. 
\eeam
We are going to show this.
Without loss of generality we may assume that $\varepsilon<1$. 

First observe that by the Jensen's inequality 
\beam \label{lyapunov_jensen}
\gamma=\inf_{n\ge
1}(n\varepsilon)^{-1}\E\log\|\bfPi_n\|^\varepsilon\le\inf_{n\ge
1}(n\varepsilon)^{-1}\log\E\|\bfPi_n\|^\varepsilon. 
\eeam
Secondly, we decompose the matrix $\bfA_t=\bfS_t+\bfN_t$ into the sum of a diagonal and a nilpotent one, where
\beao
\bfS_t=\left(\barr{cc} A_{1,t} & 0 \\
0 & A_{4,t}\earr\right)\quad \mathrm{and}\quad 
\bfN_t=\left(\barr{cc} 0 & A_{2,t} \\
0 & 0\earr\right), 
\eeao
so that $\bfS_i\bfS_j$ is diagonal and $\bfS_i\bfN_j$,
$\bfN_i\bfS_j$ are nilpotent. 
Then we write 
\[
 \bfPi_n=\bfA_n\ldots\bfA_1=(\bfS_n+\bfN_n)\cdots (\bfS_1+\bfN_1)
\]
as the sum of $2^n$ products. Moreover, observe that the product of bivariate matrices vanishes if the matrices
have only zero entries except in the top right corner, and therefore 
only terms including at most one $\bfN_i$ are nonzero. Hence we have 
\beao
\|\bfPi_n\|^\varepsilon=\|\sum_{i=0}^n
\bfS_n\ldots\bfS_{i+1}\bfN_i\bfS_{i-1}\ldots\bfS_1\|^\varepsilon\le
\sum_{i=0}^n\|\bfS_n\ldots\bfS_{i+1}\bfN_i\bfS_{i-1}\ldots\bfS_1\|^\varepsilon, 
\eeao
where $\bfN_0=\bfS_0=\bfS_{-1}=\bfI$. 
Notice that
$\bfS_n\ldots\bfS_{i+1}\bfN_i\bfS_{i-1}\ldots\bfS_1,\,i \neq 0$ is a nilpotent
matrix and its only nonzero entry is
$\Pi_{n,i+1}^{(4)}A_{2,i}\Pi_{i-1,1}^{(1)}$. Moreover, all terms in the product are independent, so we may write
\beao
\E\|\bfPi_n\|^\varepsilon\le
\E\sum_{i=1}^n\left(\Pi_{n,i+1}^{(4)}A_{2,i} \Pi_{i-1,1}^{(1)}\right)^\varepsilon
=\sum_{i=1}^n\left(\E A_4^\varepsilon\right)^{n-i}\E A_2^\varepsilon
\left(\E A_1^\varepsilon\right)^{i-1} \le nc\rho^n, 
\eeao
where $\rho=\max(\E A_1^\varepsilon, \E A_4^\varepsilon)<1$ and $c=\rho^{-1}\E A_2^\varepsilon<\infty$.

Eventually, we can estimate the top Lyapunov exponent, 
\beao
\gamma\le\inf_{n\ge 1}(n\varepsilon)^{-1}\log nc\rho^n=\varepsilon^{-1}\inf_{n\ge 1}\left(\frac{\log nc}{n}+\log \rho\right)=\varepsilon^{-1}\log \rho<0
\eeao
which is what we needed. If we assume additionally that
\begin{equation}\label{logB}
\E \log ^+ |B| <\infty,  
\end{equation}
then we may conclude that there exists an a.s.\ unique causal strictly
stationary ergodic 
solution to SRE \eqref{bivSRE} given by the infinite
 series, 
 \begin{align}
\label{solbibSRE}
  \bfW_t=\sum_{i=-\infty}^t \bfPi_{t,i+1}\bfB_i. 
 \end{align}
Moreover \eqref{uniSRE1} and \eqref{uniSRE2} are in agreement with  \eqref{solbibSRE}.

Due to Theorem 1 of \cite{brandt:1986} (see also \cite[Section 2.1]{buraczewski:damek:mikosch:2016}), 
 a strictly
 stationary positive solution for \eqref{uniSRE2} exists if 
 \[
   \E \log A_{4} <0\quad \text{and}\quad \E \log^+ B_{2}<\infty.  
 \]
 Notice that from stationarity condition of bivariate case, those for
 component wise SREs are automatically satisfied, i.e.\ $\E\log A_{1}$ and
 $\E\log A_{4}$ are smaller than $0$, which ensures strictly stationary solution
 \[
  W_{2,t} = \sum_{i=1}^\infty 
  \Pi_{t,t+2-i}^{(4)} B_{2,t+1-i}. 
 \]
  Now consider this stationary version $(W_{2,k})$ for $(D_t)$ of SRE
  \eqref{uniSRE1}. Since 
  $W_{2,k}$ is independent of elements
  $(A_{2,\ell},B_{1,\ell}),\,\ell>k$ of i.i.d.\
  sequence $(A_{2,t},B_{1,t})$, we observe that 
 \begin{align}
 \label{eq:processD}
  D_t = B_{1,t} + A_{2,t}W_{2,t-1}
 \end{align}
 is stationary and ergodic, so is the sequence $(A_{1,t},D_t)$. 
 Then from Theorem 1 of \cite{brandt:1986} the series $(W_{1,t})$ has the stationary solution given by 
 \begin{align}
 \label{infseriesw1}
  W_{1,t} = \sum_{i=0}^\infty \Pi_{t,t+1-i}^{(1)} D_{t-i}=\sum_{i=1}^\infty
  \Pi_{t,t+2-i}^{(1)}D_{t+1-i}. 
 \end{align}
 Since $(W_{1,t}, W_{2,t})$ 
satisfies  SRE \eqref{bivSRE}, then by the uniqueness of
 the solution, we have 
 \begin{lemma}
  Suppose that \eqref{epsilon_moment} and \eqref{logB} are satisfied and let $W_{1,t}$, $W_{2,t}$ be stationary solutions to \eqref{uniSRE1} and \eqref{uniSRE2} respectively.  Then $(W_{1,t},W_{2,t})$ is the stationary solution to \eqref{bivSRE}. 
 \end{lemma}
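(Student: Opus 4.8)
The plan is to produce an explicit candidate for the stationary solution of \eqref{bivSRE} by stacking the two coordinatewise stationary solutions, to check that this candidate solves the matrix recursion pathwise, and then to invoke the uniqueness of the strictly stationary causal solution of \eqref{bivSRE}. The essential input is already in place: under \eqref{epsilon_moment} we have shown $\gamma<0$, so by \cite{bougerol:picard:1992a} the equation \eqref{bivSRE} admits, together with \eqref{logB}, a unique strictly stationary causal solution, namely the convergent series \eqref{solbibSRE}. Hence it suffices to show that the pair $(W_{1,t},W_{2,t})$ built from \eqref{uniSRE2} and \eqref{infseriesw1} is a strictly stationary process satisfying \eqref{bivSRE}; uniqueness then forces it to coincide with \eqref{solbibSRE}.

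First I would verify the pathwise identity. The second coordinate is immediate: by construction $W_{2,t}=A_{4,t}W_{2,t-1}+B_{2,t}$, which is exactly the second row of $\bfA_t\bfW_{t-1}+\bfB_t$ because the $(2,1)$ entry of $\bfA_t$ vanishes. For the first coordinate, \eqref{uniSRE1} reads $W_{1,t}=A_{1,t}W_{1,t-1}+D_t$, and substituting $D_t=B_{1,t}+A_{2,t}W_{2,t-1}$ gives $W_{1,t}=A_{1,t}W_{1,t-1}+A_{2,t}W_{2,t-1}+B_{1,t}$, which is precisely the first row of $\bfA_t\bfW_{t-1}+\bfB_t$. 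Thus $(W_{1,t},W_{2,t})$ satisfies \eqref{bivSRE} for every $t$.

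The step that needs care is \emph{joint} strict stationarity, since knowing that $W_{1,t}$ and $W_{2,t}$ are each individually stationary does not by itself license applying the uniqueness statement to the vector process. Here I would use that both coordinates are realised as one and the same measurable functional of the common bilateral i.i.d.\ sequence $(\bfA_s,\bfB_s)_{s\in\Z}$, evaluated along its shifts. Concretely, $W_{2,t}$ is a fixed measurable function of $\{(A_{4,s},B_{2,s}):s\le t\}$, the auxiliary $D_s=B_{1,s}+A_{2,s}W_{2,s-1}$ is a fixed measurable function of $\{(A_{2,s},B_{1,s})\}\cup\{(A_{4,r},B_{2,r}):r\le s-1\}$, and $W_{1,t}$ in \eqref{infseriesw1} is in turn a fixed measurable function of $\{(A_{1,s},D_s):s\le t\}$. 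Composing, the pair $(W_{1,t},W_{2,t})$ equals $\Phi\big(\theta^{t}(\bfA_s,\bfB_s)_{s}\big)$ for one time-independent map $\Phi$ and the shift $\theta$; since the driving sequence is i.i.d., hence $\theta$-stationary, the process $t\mapsto(W_{1,t},W_{2,t})$ is strictly stationary (and ergodic). The a.s.\ convergence of the series \eqref{infseriesw1} and \eqref{solbibSRE}, which is exactly what \eqref{epsilon_moment} and \eqref{logB} secure, guarantees that $\Phi$ is well defined.

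With joint stationarity and the pathwise recursion in hand, the unique strictly stationary causal solution of \eqref{bivSRE} must be $(W_{1,t},W_{2,t})$, completing the argument. I expect the only genuine obstacle to be the joint-stationarity bookkeeping just described; the pathwise verification is a one-line rearrangement driven entirely by the definition of $D_t$, and the existence and uniqueness input has already been assembled in the preceding discussion.
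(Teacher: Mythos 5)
Your proposal is correct and follows essentially the same route as the paper: stack the coordinatewise stationary solutions, check by direct substitution (using the definition of $D_t$) that the pair solves \eqref{bivSRE} pathwise, and invoke the uniqueness of the causal strictly stationary solution guaranteed by the negative top Lyapunov exponent. The only difference is presentational: you make the joint strict stationarity explicit via the shift-functional argument, whereas the paper leaves it implicit in its appeal to Brandt's theorem for the stationary ergodic pair $(A_{1,t},D_t)$.
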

 Therefore, in the sequel if the stationarity condition, i.e.\ 
 \eqref{epsilon_moment} and \eqref{logB}, is satisfied, we may
 work on these component-wise solutions.

 \section{Main results}

 \subsection{Component-wise tail behavior}
Our aim of this section is to describe the tail behavior of $W_{1,t}$
 and $W_{2,t}$. This is the content of Theorem \ref{thm:mainresult}
 below. We are going to use a fundamental result for one-dimensional SRE formulated below as Theorem \ref{thmunitail}. The statement appeared first in  
 \cite{kesten:1973} as a corollary of a more general result, but the ``right'' proof for the one-dimensional case was given later on by Goldie
 \cite{goldie:1991} with the constants in the tails specified for the
 first time. 
 Consider the SRE; 
 \[
  X_t=Q_t X_{t-1}+R_t,\quad t\in\Z,
 \]
 where $((Q_t,R_t))_{t\in \Z}$ is an $\R^2$-valued i.i.d.\ sequence. 
 The generic random variable (r.v.) of the sequence $((Q_t,R_t))$ is denoted by $(Q,R)$. 
 \begin{theorem}
 \label{thmunitail}
  Assume that the following conditions hold. \\
 1. $Q>0\,a.s.$ and $\log Q$ is non-arithmetic. \\
 2. There is $\alpha>0$ such that $\E Q^\alpha=1$, $\E |R|^\alpha<\infty$
  and $\E Q^\alpha \log^+ Q<\infty$. \\
 3. $\P (Rx+Q=x)<1$ for every $x\in \R$. \\
 Then the equation $X\stackrel{d}{=}QX+R$ has a solution $X$ which is
  independent of $(Q,R)$ and there exist constants $c_+,\,c_-$ such that
  $c_++c_->0$ and 
 \begin{align*}
  \P (X>x) \sim c_+ x^{-\alpha}\quad \mathrm{and}\quad \P (X\le -x) \sim c_-
  x^{-\alpha},\quad x\to\infty.
 \end{align*} 
 The constants $c_+,\,c_-$ are given by 
 \begin{align*}
c_+ = \frac{1}{\alpha m_\alpha}\E[(QX+R)_+^\alpha-(QX)_+^\alpha] \quad
  \mathrm{and}\quad 
c_- = \frac{1}{\alpha m_\alpha}\E[(QX+R)_-^\alpha-(QX)_-^\alpha], 
 \end{align*}
 where $m_\alpha =\E Q^\alpha \log Q >0$. 
 \end{theorem}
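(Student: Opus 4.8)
The plan is to treat this as the one-dimensional Kesten--Goldie theorem and to run Goldie's implicit renewal argument, which is the natural engine once $\E Q^{\alpha}=1$ is available. First I would dispose of existence and uniqueness. The convexity of $s\mapsto \E Q^{s}$ together with $\E Q^{0}=\E Q^{\alpha}=1$ forces $\E\log Q<0$, and the same convexity gives $m_{\alpha}=\E Q^{\alpha}\log Q>0$, which is finite because $\E Q^{\alpha}\log^{+}Q<\infty$ while $q^{\alpha}\log^{-}q$ is bounded near $0$. Since $\E|R|^{\alpha}<\infty$ yields $\E\log^{+}|R|<\infty$, the contraction hypothesis of Brandt's theorem (already used in the excerpt) holds, so the backward series built from the $(Q_{t},R_{t})$ converges a.s.\ to the unique stationary $X$, independent of $(Q,R)$ in the distributional identity $X\eqd QX+R$.

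The core step is to set up a renewal equation. Writing $\P(X>t)=\P(QX+R>t)$ and inserting $\P(QX>t)$, one obtains
\begin{equation*}
\P(X>t)=\P(QX>t)+g_{+}(t),\qquad g_{+}(t):=\P(QX+R>t)-\P(QX>t).
\end{equation*}
Substituting $t=\e^{s}$ and multiplying by $\e^{\alpha s}$ turns the first term on the right into a convolution against the law of $\log Q$; after the exponential change of measure $\d\widetilde{\P}=Q^{\alpha}\,\d\P$, under which the random walk $\sum_{k}\log Q_{k}$ has positive drift $m_{\alpha}$ and the tilted step law $\e^{\alpha u}\,\d F_{\log Q}(u)$ has total mass $\E Q^{\alpha}=1$, this becomes a genuine (non-defective) renewal equation for $\psi(s):=\e^{\alpha s}\P(X>\e^{s})$ with forcing term $\e^{\alpha s}g_{+}(\e^{s})$. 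Non-arithmeticity of $\log Q$ then lets me invoke the key renewal theorem to conclude $\psi(s)\to c_{+}$, that is $\P(X>t)\sim c_{+}t^{-\alpha}$.

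The hard part is verifying that the forcing term is directly Riemann integrable, equivalently that
\begin{equation*}
\int_{0}^{\infty}t^{\alpha-1}\,\big|\P(QX+R>t)-\P(QX>t)\big|\,\d t<\infty .
\end{equation*}
I would bound the integrand by overshoot probabilities of the form $\P\big(|QX-t|\le|R|\big)$, splitting $g_{+}$ into its two one-sided events, and control the resulting integral using $\E|R|^{\alpha}<\infty$ together with a moment estimate $\E|X|^{\beta}<\infty$ for some $\beta\in(0,\alpha)$; the latter follows from the series representation since $\E Q^{\beta}<1$ for $0<\beta<\alpha$ (again by convexity) and $\E|R|^{\beta}<\infty$. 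This is exactly Goldie's smoothing estimate and is where the bulk of the technical work sits. Once integrability holds, Fubini applied to $\int_{0}^{\infty}\alpha t^{\alpha-1}(\P(Y>t)-\P(Y'>t))\,\d t=\E[Y_{+}^{\alpha}-(Y')_{+}^{\alpha}]$ with $Y=QX+R$ and $Y'=QX$ rewrites the renewal limit as $c_{+}=\frac{1}{\alpha m_{\alpha}}\E[(QX+R)_{+}^{\alpha}-(QX)_{+}^{\alpha}]$.

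The left tail is obtained by applying the same argument to $-X$, which satisfies $-X\eqd Q(-X)+(-R)$, giving $c_{-}=\frac{1}{\alpha m_{\alpha}}\E[(QX+R)_{-}^{\alpha}-(QX)_{-}^{\alpha}]$. It remains to show $c_{+}+c_{-}>0$. Here I observe that
\begin{equation*}
c_{+}+c_{-}=\frac{1}{\alpha m_{\alpha}}\E\big[\,|QX+R|^{\alpha}-|QX|^{\alpha}\,\big],
\end{equation*}
and since $\E Q^{\alpha}=1$ this expression vanishes precisely when $\E|X|^{\alpha}<\infty$; so the real content of positivity is to exclude tails decaying faster than $t^{-\alpha}$. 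I would do this with a Kesten--Grincevi\v{c}ius-type lower bound: large values of $X$ are produced by large values of the products $Q_{1}\cdots Q_{i}$, whose running maximum carries a genuine $t^{-\alpha}$ tail by the renewal theorem, and the non-degeneracy hypothesis (condition~3, ruling out that the affine map $w\mapsto Qw+R$ collapses $X$ to an a.s.\ constant) guarantees this propagates to a genuine tail of $X$ rather than cancelling. This positivity step, rather than the renewal machinery itself, is the most delicate point of the argument.
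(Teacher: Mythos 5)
This theorem is not proved in the paper at all: it is quoted as the classical Kesten--Goldie result, with the ``right'' proof explicitly attributed to Goldie (1991). Your proposal is a correct outline of exactly that argument --- Goldie's implicit renewal theory: exponential tilting by $Q^{\alpha}$ to get a proper renewal equation for $e^{\alpha s}\P(X>e^{s})$, the key renewal theorem after verifying the integral/DRI condition via the smoothing device, the Fubini identity identifying $c_{+}$ (Goldie's Lemma 9.4), the reflection $-X\stackrel{d}{=}Q(-X)+(-R)$ for the left tail, and the non-degeneracy argument for $c_{+}+c_{-}>0$ --- so it coincides with the proof the paper relies on by citation.
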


Due to stationarity it is enough to consider $W_{1,0}$ and $W_{2,0}$.  
The above result is directly applicable to 
 $W_{2,0}$ but not to $W_{1,0}$. The estimate of the tail of $W_{1,0}$ is more delicate. Due to non-negativity, we may write the stationary solution 
 \eqref{infseriesw1} as 
 \begin{align}
 \label{infseriesw2}
  W_{1,0}=\sum_{i=1}^\infty \Pi_{0,2-i}^{(1)} D_{1-i} = \sum_{i=1}^\infty
  \Pi_{0,2-i}^{(1)} A_{2,1-i} W_{2,-i} +\sum_{i=1}^\infty
  \Pi_{0,2-i}^{(1)} B_{1,1-i}, 
 \end{align}
 and analyze these infinite sums separately. Consider one more SRE;
 \begin{align}\label{uniSRE22}
  \wt W_{1,t} = B_{1,t}+A_{1,t} \wt W_{1,t-1}.
 \end{align}
Its stationary solution is 
 \begin{align*}
  \wt W_{1,0} = \sum_{i=1}^\infty \Pi^{(1)}_{0,2-i} B_{1,1-i}.
 \end{align*}
 This corresponds to the second term in \eqref{infseriesw2}.
 
 Assume that there exist $\alpha_1$ and $\alpha_2$ such that 
 \begin{align} 
  \begin{split} \label{comptailcondi}
  \E A_{1}^{\alpha_1} &=1,\quad \E A_{1}^{\alpha_1} \log^+ A_{1} <\infty\quad
  \mathrm{and} \quad \E B_{1}^{\alpha_1} <\infty, \\
  \E A_{4}^{\alpha_2} &=1,\quad \E A_{4}^{\alpha_2} \log^+ A_{4} <\infty\quad
  \mathrm{and} \quad \E B_{2}^{\alpha_2} <\infty, 
 \end{split}
 \end{align}
 then due to Theorem \ref{thmunitail}, we have
 \begin{align}
  & \P (\wt W_{1,0}>x) \sim c_{1}x^{-\alpha_1},\nonumber \\
  & \P (W_{2,0}>x) \sim c_{2} x^{-\alpha_2}, \label{weq:tails}
 \end{align}
 where positive constants $c_1$ and $c_2$ are given by 
 \begin{align*}
  c_{1} &= \frac{1}{\alpha_1\E A_{1}^{\alpha_1}\log A_{1}} \E [
 (A_{1} \wt W_{1,0} + B_{1})^{\alpha_1} - (A_{1} \wt W_{1,0})^{\alpha_1}
 ], \\
 c_{2} &= \frac{1}{\alpha_2 \E A_{4}^{\alpha_2}\log A_{4}} \E[
 (A_{4} W_{2,0}+B_{2})^{\alpha_2}- (A_{4} W_{2,0})^{\alpha_2}
 ].
 \end{align*}
 Now we are ready to describe the tail behavior of $W_{1,0}$. Its tail index is   equal to $\min\, (\alpha_1,\alpha_2)$. 

 \begin{theorem}
 \label{thm:mainresult}
  Consider the bivariate SRE \eqref{bivSRE} such that 
  $\log A _1$ and $\log A_4$ are non-arithmetic. Assume that \eqref{comptailcondi} holds and $\E A_{2}^{\min\,
  (\alpha_1,\alpha_2)}<\infty$. Then the stationary solution $\bfW_t$ satisfies
 \begin{align*}
  \P (W_{1,0}>x) \sim \Bigg \{
\begin{array}{ll}
\ov c_{1} x^{-\alpha_1} & \mathrm{if}\ \alpha_1
   < \alpha_2 \\
\wt c_{1} x^{-\alpha_2} &\mathrm{if}\
   \alpha_1 > \alpha_2, 
\end{array}
 \end{align*}
 where 
  \begin{align}
 \label{rep:constant2}
  \ov c_{1} &= \frac{2}{\alpha_1}\E\big[
 (D_0+A_{1,0} W_{1,-1})^{\alpha_1} - (A_{1,0} W_{1,-1})^{\alpha_1}
 \big], \\
 \label{rep:constant1}
  \wt c_{1} &=c_2\, \E\Big( \lim_{s\to\infty} \sum_{i=1}^s \Pi_{0,2-i}^{(1)}
 A_{2,1-i} \Pi_{-i,1-s}^{(4)}
 \Big)^{\alpha_2}.
 \end{align}
 \end{theorem}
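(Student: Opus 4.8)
The plan is to treat the two regimes separately, in each case reducing the problem to the one–dimensional tail theory of Theorem \ref{thmunitail} together with the tails already recorded in \eqref{weq:tails}. The structural fact I would rely on throughout is that, since every $D_t$ is built only from $B_1,A_2,A_4,B_2$, the i.i.d.\ multiplier sequence $(A_{1,t})$ is independent of the \emph{entire} stationary noise sequence $(D_t)$, even though the $D_t$ are mutually dependent through the common $W_2$–process.

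Case $\alpha_1<\alpha_2$. First I would verify that the scalar SRE \eqref{uniSRE1} has noise with a finite $\alpha_1$–moment. Writing $D_0=B_{1,0}+A_{2,0}W_{2,-1}$, one has $\E B_1^{\alpha_1}<\infty$ by \eqref{comptailcondi}; moreover $A_{2,0}$ is independent of $W_{2,-1}$, with $\E A_2^{\alpha_1}<\infty$ (here $\min(\alpha_1,\alpha_2)=\alpha_1$) and $\E W_{2,-1}^{\alpha_1}<\infty$ because $W_{2,-1}$ is regularly varying with the strictly larger index $\alpha_2$; hence $\E D_0^{\alpha_1}<\infty$. Since $\E A_1^{\alpha_1}=1$ and $\log A_1$ is non–arithmetic, I would then derive $\P(W_{1,0}>x)\sim\ov c_1\,x^{-\alpha_1}$ from an implicit–renewal (Goldie–type) argument applied to \eqref{uniSRE1}, with $\ov c_1$ the resulting renewal constant \eqref{rep:constant2}. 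The point requiring care—and the main obstacle here—is that the pair $(A_{1,t},D_t)$ is only stationary and ergodic, not i.i.d., so Theorem \ref{thmunitail} does not apply verbatim. What rescues the argument is that the renewal random walk $\sum_j\log A_{1,-j}$ driving Goldie's computation depends only on $(A_{1,t})$, which is independent of $(D_t)$; one then has to show that the dependent but lighter–tailed noise (index $\alpha_2>\alpha_1$) contributes to the $\alpha_1$–tail exactly as an independent noise of the same marginal law would, leaving the index equal to $\alpha_1$ and the constant equal to \eqref{rep:constant2}.

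Case $\alpha_1>\alpha_2$. Here I would start from the decomposition \eqref{infseriesw2}, $W_{1,0}=\wt W_{1,0}+U$ with $U=\sum_{i\ge1}\Pi_{0,2-i}^{(1)}A_{2,1-i}W_{2,-i}$. By \eqref{weq:tails} the term $\wt W_{1,0}$ is regularly varying with index $\alpha_1>\alpha_2$, hence asymptotically negligible for the $\alpha_2$–tail, so it suffices to analyse $U$. I would exploit that the $W_{2,-i}$ are jointly regularly varying with index $\alpha_2$ while the weights $\Pi_{0,2-i}^{(1)}A_{2,1-i}$ involve only the more recent coordinates of the driving sequence. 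Unfolding each $W_{2,-i}$ back to time $-s$ through \eqref{uniSRE2}, namely $W_{2,-i}=\Pi_{-i,1-s}^{(4)}W_{2,-s}+(\text{terms in }B_2)$, the total coefficient by which one large value $W_{2,-s}$ feeds into $U$ is $K_s=\sum_{i=1}^s\Pi_{0,2-i}^{(1)}A_{2,1-i}\Pi_{-i,1-s}^{(4)}$, and crucially $W_{2,-s}$ is independent of $K_s$. A Breiman/single–big–jump argument then gives the $\alpha_2$–tail with the constant of the truncated product $K_s W_{2,-s}$ proportional to $c_2\,\E[K_s^{\alpha_2}]$, and passing to the limit produces index $\alpha_2$ and the constant $\wt c_1$ of \eqref{rep:constant1}. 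That the relevant $\alpha_2$–moments stay controlled is not the difficulty: using $\E A_4^{\alpha_2}=1$, $\E A_2^{\alpha_2}<\infty$ and $\E A_1^{\alpha_2}<1$ (the last by convexity of $\beta\mapsto\E A_1^{\beta}$ with $\E A_1^{0}=\E A_1^{\alpha_1}=1$ and $\alpha_2<\alpha_1$), a subadditivity estimate bounds $\E K_s^{\alpha_2}$ by $\E A_2^{\alpha_2}/(1-\E A_1^{\alpha_2})$ uniformly in $s$.

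The hard part is the same phenomenon in both regimes: controlling the dependence generated by the single underlying $W_2$–process. In the first case this amounts to upgrading Goldie's i.i.d.\ renewal theorem to a noise $(D_t)$ that is mutually dependent though independent of the multipliers, and checking that the lighter tail of $W_2$ changes neither the index nor the constant. In the second case it means making the single–big–jump heuristic rigorous: a large excursion of $W_2$ occupies a whole block of consecutive times $-i$, so one must show that the tail of the dependent series $U$ is produced by exactly one such excursion, that the contributions of several moderate terms and of the $B_2$–remainders are asymptotically negligible, and that the limiting coefficient entering \eqref{rep:constant1} is well defined with a finite and positive $\alpha_2$–moment, the order of the limits in $s$ and in the tail level $x$ being the delicate issue.
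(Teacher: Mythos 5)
Your treatment of the case $\alpha_1>\alpha_2$ is, in outline, exactly the paper's proof: discard $\wt W_{1,0}$ (lighter tail), truncate $\wt X=\sum_{i\ge1}\Pi^{(1)}_{0,2-i}A_{2,1-i}W_{2,-i}$ at lag $s$, unfold each $W_{2,-i}$ down to time $-s$ so that the truncated part becomes $K_sW_{2,-s}$ plus a $B_2$-remainder with finite $\alpha_2$-moment, apply Breiman to $K_sW_{2,-s}$ using $K_s\perp W_{2,-s}$, and then let $s\to\infty$. The two points you flag but do not carry out are precisely where the paper works: the tail remainder $\wt X^s$ is controlled by a termwise Markov/conditioning estimate (with weights $\zeta(\mu)i^{\mu}$) yielding \eqref{limitto01}, and the interchange of the limits in $s$ and $x$ is settled by a subsequence argument --- $w_s=\E K_s^{\alpha_2}$ is bounded above uniformly in $s$ and below by a positive constant (since $\wt X\ge A_{2,0}W_{2,-1}$), and the sandwich \eqref{estimate} forces every subsequential limit of $(w_s)$ to equal $\lim_{x\to\infty}\P(\wt X>x)/\P(W_{2,0}>x)$, so $\lim_s w_s$ exists. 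So this half is a correct, though incomplete, outline of the paper's own route.

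The genuine gap is in the case $\alpha_1<\alpha_2$. You pose the problem as ``upgrading'' the i.i.d.\ renewal theorem (Theorem \ref{thmunitail}) to the dependent noise $(D_t)$, and propose to show that this noise ``contributes to the $\alpha_1$-tail exactly as an independent noise of the same marginal law would.'' That is not the right statement: the constant \eqref{rep:constant2} is an expectation over the \emph{joint} law of $(D_0,A_{1,0},W_{1,-1})$, and $D_0=B_{1,0}+A_{2,0}W_{2,-1}$ is genuinely dependent on $W_{1,-1}$ (both are driven by the same $W_2$-path), so replacing $D_0$ by an independent copy with the same marginal would in general change the constant. More importantly, no upgrade of renewal theory is needed: Goldie's Theorem 2.3 in \cite{goldie:1991} is an \emph{implicit} renewal theorem requiring only (i) $A_{1,0}$ independent of $W_{1,-1}$ (true by causality), (ii) $W_{1,-1}\eqd W_{1,0}=D_0+A_{1,0}W_{1,-1}$ a.s.\ (stationarity plus the recursion), and (iii) finiteness of $\int_0^\infty|\P(W_{1,0}>x)-\P(A_{1,0}W_{1,-1}>x)|\,x^{\alpha_1-1}\,\d x$; it places no restriction on the dependence between $D_0$ and $W_{1,-1}$. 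The entire technical content of this half of the theorem is point (iii), equivalently $\E\bigl[(D_0+A_{1,0}W_{1,-1})^{\alpha_1}-(A_{1,0}W_{1,-1})^{\alpha_1}\bigr]<\infty$, and your proposal never addresses it. For $\alpha_1\le1$ it follows from subadditivity and $\E D_0^{\alpha_1}<\infty$ (which you do establish), but for $\alpha_1>1$ one needs the cross-moment bound $\E\bigl[(A_{1,0}W_{1,-1})^{\alpha_1-1}D_0\bigr]<\infty$, and in particular $\E\bigl[W_{1,-1}^{\alpha_1-1}W_{2,-1}\bigr]<\infty$; the paper obtains this by H\"older's inequality with exponent $q=\alpha_1+\varepsilon<\alpha_2$ (so $\E W_2^{q}<\infty$ by \eqref{weq:tails}) combined with $\E W_{1,0}^{\beta}<\infty$ for $\beta=p(\alpha_1-1)<\alpha_1$, proved via Minkowski's inequality and $\E A_1^{\beta}<1$. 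Without this step, or a substitute for it, the $\alpha_1<\alpha_2$ half of Theorem \ref{thm:mainresult} remains unproved.
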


 Note that the limit in \eqref{rep:constant1} has a somewhat strange form,
 and it seems difficult to write it as just an infinite sum. However,
 its convergence is guaranteed in the proof. 

 \begin{proof}
  Stationarity condition \eqref{epsilon_moment} follows from \eqref{comptailcondi}. Indeed, the functions
  $g_i(h)=\E A_i^h,\,i=1,4$ are convex and so  there exist 
  $\alpha<\min\,(\alpha_1,\alpha_2)$ such that $\E A_i^\alpha<1$. Hence we
  may work on the stationary version. 
 
  Suppose $\alpha_1>\alpha_2$. 
  Observe that in \eqref{infseriesw2}, each term of the first infinite
  sum includes $W_{2}$, $\E A_{1}^{\alpha_2},\,\E
  A_{2}^{\alpha_2}<\infty$,
  and the second sum equals in distribution to $\wt W_{1,0}$. Then
  since 
 \begin{align*}
  \frac{\P (\wt W_{1,0}>x)}{\P(W_{2,0}>x)} \sim
  \frac{c_{1,+}}{c_{2,+}} x^{\alpha_2-\alpha_1} \to 0,\quad as \quad
  x\to\infty, 
 \end{align*}
 it suffices to consider the sum
 \begin{align}
 \label{eq:expressX}
  \wt X = \sum_{i=1}^\infty \Pi_{0,2-i}^{(1)} A_{2,1-i} W_{2,-i}
  =\sum_{i=1}^s \Pi_{0,2-i}^{(1)} A_{2,1-i}W_{2,-i}+ \sum_{i=s+1}^\infty
  \Pi_{0,2-i}^{(1)}A_{2,1-i} W_{2,-i} =: \wt X_s +\wt X^s. 
 \end{align}
Indeed, $\wt X\geq A_{2,0} W_{2,-1}$ and so 
\begin{equation}\label{heavyX}
\P (\wt X>x)\geq cx^{-\a _2}
\end{equation}
for a strictly positive $c$. Therefore, we can invoke the property of
  dependent summands of regularly varying r.v.'s (
Lemma B.6.1 of \cite{buraczewski:damek:mikosch:2016}) in  order to obtain 
 \[
  \P (W_{1,0}>x) = \P (\wt X+\wt W_{1,0}>x) \sim \P (\wt X>x)+\P (\wt W_{1,0}>x)
  \sim \P (\wt
  X>x). 
 \]
 We start with $\wt X_s$ and apply the induction to
  $W_{2,-i}$ in $\wt X_s$. Since 
 \begin{align*}
  W_{2,t} &= A_{4,t}W_{2,t-1}+B_{2,t} \\
         &= A_{4,t} A_{4,t-1} W_{2,t-2}+A_{4,t}B_{2,t-1}+B_{2,t} \\
         &= A_{4,t}A_{4,t-1}\cdots A_{4,t-j+1}W_{2,t-j} +
  \sum_{k=1}^{j-1}A_{4,t}\cdots A_{4,t-k+1}B_{2,t-k}+B_{2,t} \\
  &= \Pi_{t,t-j+1}^{(4)} W_{2,t-j}
  +\sum_{k=0}^{j-1}\Pi_{t,t-k+1}^{(4)} B_{2,t-k}, 
 \end{align*}
 we change indices $(t,j)\to (i,s)$ such that $t=-i$ and $s=j-t$ to obtain 
 \begin{align}
 \label{eq:expressW2}
  W_{2,-i}= \Pi_{-i,1-s}^{(4)}
  W_{2,-s}+\sum_{k=0}^{s-i-1}\Pi_{-i,1-i-k}^{(4)} B_{2,-i-k}. 
 \end{align}
 Substitution of the above into $\wt X_s$ yields 
 \begin{align}
 \label{eq:expressXs}
  \wt X_s &= \sum_{i=1}^s \Pi_{0,2-i}^{(1)} A_{2,1-i}\Big(
 \Pi_{-i,1-s}^{(4)}W_{2,-s}+ \sum_{k=0}^{s-i-1}\Pi_{-i,1-i-k}^{(4)} B_{2,-i-k}
 \Big) \\
 &= \sum_{i=1}^s \Pi_{0,2-i}^{(1)} A_{2,1-i}\Pi_{-i,1-s}^{(4)}W_{2,-s} +
  \sum_{i=1}^s \Pi_{0,2-i}^{(1)} A_{2,1-i}\sum_{k=0}^{s-i-1}
  \Pi_{-i,1-i-k}^{(4)} B_{2,-i-k} \nonumber \\
 &= \wt X_{s,1} + \wt X_{s,2}. \nonumber 
 \end{align}
Now we are going to prove that $\E \wt X_{s,2}^{\a _2}<\infty$. 
Recall that $(\bfA_t,\bfB_t)$ are i.i.d. and hence 
$\Pi_{0,2-i}^{(1)} A_{2,1-i}$ and $\sum_{k=0}^{s-i-1}
  \Pi_{-i,1-i-k}^{(4)} B_{2,-i-k},\,i=1,2,\ldots,s$ are independent. 
Moreover, $\E A_{4}^{\alpha_2}=1$,
 $\E A^{\alpha_2}_{1}<1$, $\E A_{2}^{\alpha_2}<\infty$
  and $\E B_{i}^{\alpha_i}<\infty$. 
 By the Minkowski inequality 
 \begin{align*}
  \E \wt X_{s,2}^{\alpha_2} &= \E 
 \big(
 \sum_{i=1}^s \Pi_{0,2-i}^{(1)} A_{2,1-i} \sum_{k=0}^{s-i-1}
  \Pi_{-i,1-i-k}^{(4)} B_{2,-i-k}
 \big)^{\alpha_2} \\
 & \le \Big[
 \sum_{i=1}^s \Big\{
 (\E A_{1}^{\alpha_2})^{i-1} \E A_{2}^{\alpha_2} \E
  B_{2}^{\alpha_2} (s-i)^{\alpha_2}
 \Big\}^{1/\alpha_2}
 \Big]^{\alpha_2}<\infty
 \end{align*}
 for $\alpha_2> 1$ 
and for $\alpha_2\le 1$ by
  the triangle inequality 
 \begin{align*}
  \E \wt X_{s,2}^{\alpha_2} &= \sum_{i=1}^s \E
  (\Pi_{0,2-i}^{(1)})^{\alpha_2}\E A_{2,1-i}^{\alpha_2} \E \big(
 \sum_{k=0}^{s-i-1} \Pi_{-i,1-i-k}^{(4)} B_{2,-i-k}\big)^{\alpha_2} \\
 & \le \E A_{2}^{\alpha_2}\E B_{2}^{\alpha_2} \sum_{i=1}^s (\E
  A_{1}^{\alpha_2})^{i-1}(s-i)<\infty .
 \end{align*}
Hence we have 
 \[
  \limsup_{x\to\infty} \frac{\P (\wt X_{s,2}>x)}{\P (W_{2,0}>x)}=0 
 \]
 for fixed $s$. 
 Since all the terms are positive
\[
 \P (\wt X_{s,1}>(1+\varepsilon)x)\le \P (\wt X>x) \le \P (\wt
  X_{s,1}>(1-\varepsilon)x) +\P (\wt X_{s,2}>\frac{\varepsilon}{2}x)
  +\P (\wt X^s >\frac{\varepsilon}{2}x)
\] 
 for some $\varepsilon \in (0,1)$, it follows from regular variation of
  $W_{2,0}$ that for $s\ge 1$,
 \begin{align}
 \begin{split}
  (1+\varepsilon)^{-\alpha_2} w_s
  \le \liminf_{x\to\infty} & \frac{\P (\wt X>x)}{\P (W_{2,0}>x)} 
 \label{estimate} 
 \le 
  \limsup_{x \to \infty} \frac{\P(\wt X>x)}{\P(W_{2,0}>x)} \le (1-\varepsilon)^{-\alpha_2} w_s \\ 
   & + \limsup_{x\to \infty} \frac{\P (\wt X_{s,2}>
  \frac{\varepsilon}{2}x)}{\P (W_{2,0}>x)}+ \limsup_{x\to \infty}
  \frac{\P (\wt X^s >\frac{\varepsilon}{2}x)}{\P (W_{2,0}>x)},
 \end{split}
 \end{align}
 where 
 \begin{align*}
  w_s:=\E 
 \big(
 \sum_{i=1}^s \Pi_{0,2-i}^{(1)}\, A_{2,1-i}\, \Pi_{-i,1-s}^{(4)}
 \big)^{\alpha_2}.
 \end{align*}
 Hence, in order to obtain the result, it suffices to show that 
 \begin{align}
 \label{limitto01}
  \lim_{s\to\infty} \limsup_{x\to \infty} \frac{\P(\wt X^s>x)
  }{\P(W_{2,0}>x)} =0. 
 \end{align}
However, by Markov inequality together with conditioning, it follows
  that 
\begin{align*}
 \frac{\P(\wt X^s >x)}{\P(W_{2,0}>x)} &= \frac{\P(\sum_{i=s+1}^\infty
 \Pi_{0,2-i}^{(1)} A_{2,1-i} W_{2,-i}>x 
)}{\P(W_{2,0}>x)} \\
 & \le \sum_{i=1}^\infty
 \frac{ \P\big( \Pi^{(1)}_{0,2-(s+i)} A_{2,1-i} W_{2,-(s+i)}>x
 i^{-\mu}/\zeta(\mu)\big)}{\P(W_{2,0}>x)} \\
 & = \sum_{i=1}^\infty \E \Big[
 \frac{\P(G_i W_{2,-(i+s)} >x \mid G_i)}{\P(W_{2,0}>x)}
 \Big], 
\end{align*}
 where $G_i=\zeta(\mu) i^\mu \Pi_{0,2-(s+i)}^{(1)}A_{2,1-(s+i)}$ with
  $\mu>1$ and $\zeta(\mu)$ is the zeta function. 
Then 
\begin{align*}
 \limsup_{x\to \infty} \frac{\P(\wt X^s >x)}{\P(W_{2,0}>x)} &=
 \sum_{i=1}^\infty \E \Big[
 \limsup_{x\to \infty}
 \frac{\P(G_i W_{2,-(i+s)} >x \mid G_i)}{\P(W_{2,0}>x)}
 \Big] \\
 &\le  c \sum_{i=1}^\infty \E G_i^{\alpha_2} \\
 & = c \sum_{i=1}^\infty \E(\Pi_{0,2-(s+i)}^{(1)})^{\alpha_2}\E
 A_{2,1-(s+i+1)}^{\alpha_2} \zeta(\mu)i^{\alpha_2\mu} \\
 & \le c' (\E A_1^{\alpha_2})^s \sum_{i=1}^\infty (\E A_1^{\alpha_2})^i
 i^{\alpha\mu}, 
\end{align*}
where $c$ and $c'$ are some positive constants. Since $\E A_1^{\alpha_2}<1$
  so that $\sum_{i=1}^\infty (\E A_1^{\alpha_2})^i
  i^{\alpha\mu}<\infty$, we take $s\to\infty$ and obtain
  \eqref{limitto01}. 
Now, as before, if $\a _2 \leq 1$ then 
$$
w_s\leq \sum _{i=1}^{\infty}(\E A_1^{\a _2})^{i-1}\E A_2 ^{\a _2}<\infty$$ and if $\a _2>1$ then
 $$
w_s^{1\slash \a _2}\leq \sum _{i=1}^{\infty}(\E A_1^{\a _2})^{(i-1)\slash \a _2}(\E A_2 ^{\a _2})^{1\slash \a _2}<\infty .$$
Moreover, in view of \eqref{heavyX} and \eqref{estimate} there is
  $c''>0$ such that for every $s$, $w_s\geq c''$. 
Hence taking a converging subsequence $w_{s_k} $ of $w_s$ we obtain
\begin{equation}
\lim_{x \to \infty} \frac{\P(\wt X>x)}{\P(W_{2,0}>x)}=\lim _{k\to \infty }w_{s_k} =:w>0,
\end{equation}
which, in particular, proves that $\lim _{s\to \infty }w_{s} =w$ because we have the same limit for any converging subsequence. Finally, we obtain
 \[
  \P(W_{1,0}>x) \sim \lim_{s\to\infty} \E\big(
 \sum_{i=1}^s \Pi_{0,2-i}^{(1)} A_{2,1-i} \Pi_{-i,1-s}^{(4)}
\big)^{\alpha_2} \P (W_{2,0}>x). 
 \]  

Suppose now that $\alpha_1<\alpha_2$. Observe that
\begin{align*}
 W_{1,0} & = D_0 + \sum_{i=1}^\infty A_{1,0}\cdots A_{1,1-i} D_{-i} \\
 & = D_0 + A_{1,0} D_{-1} + \sum_{i=2}^\infty A_{1,0}\cdots
 A_{1,1-i}D_{-i} \\
 & = D_0 + A_{1,0}\Big(
 D_{-1} + \sum_{i=1}^\infty A_{1,-1}\cdots A_{1,-i}D_{-i-1}
 \Big) \\
 & = D_0 + A_{1,0} W_{1,-1}, 
\end{align*}
Then $W_{1,-1}$ has the same law as $W_{1,0}$ and is independent of $A_{1,0}$. We are going to use Theorem 2.3 of Goldie \cite{goldie:1991}. We will have to prove that 
\[
 I=\int_{0}^\infty |\P (W_{1,-1} >x)- \P (A_{1,0} W_{1,-1}>x)|x^{\alpha_1-1}\d x <\infty. 
\]
Then Theorem 2.3 of \cite{goldie:1991} implies that 
\[
 \lim_{t\to\infty} \P (W_{1,-1}>x)\, x^{\alpha_1} = C_+
\]
with 
\[
 C_+=\frac{1}{\E A_{1}^{\alpha_1}\log A_{1}}\int_0^\infty
  (\P(W_{1,-1}>x)-\P(A_{1,0} W_{1,-1}>x))x^{\alpha_1-1}\d x. 
\]
In view of Lemma 9.4 in \cite{goldie:1991}, 
\begin{align*}
 I &= \int_{0}^\infty |\P(W_{1,-1}>x)-\P(A_{1,0} W_{1,-1} >x)|\,x^{\alpha_1-1}\d x \\
 &= \int_{0}^\infty | \P(D_0+A_{1,0} W_{1,-1} >x) -
 \P(A_{1,0} W_{1,-1}>x) |\, x^{\alpha_1-1}\d x \\
 &= \frac{1}{\alpha_1} \E\Big[
 (D_0+A_{1,0} W_{1,-1} )^{\alpha_1}- (A_{1,0} W_{1,-1})^{\alpha_1} 
\Big] = \E A_{1}^{\alpha_1}\log A_{1}\cdot C_+.
\end{align*}
We have to prove that $0<I<\infty$. The first inequality is obvious,
  since all the variables are positive and $D_0+A_{1,0} W_{1,-1}
> A_{1,0}W_{1,-1}$ with positive probability. If
  $\alpha_1 \le 1$, then
 \[
  I\le \frac{1}{\alpha_1}\E D^{\alpha_1}<\infty. 
 \]
If $\alpha_1>1$, then 
\begin{align*}
 I 
  & \le \E\big[
 (D_0+A_{1,0} W_{1,-1})^{\alpha_1-1}D_0 
\big],
 \end{align*}
where we have used the fact that $b^\kappa -a^\kappa =\int_a^b \kappa
 r^{\kappa-1} \d r \le \kappa b^{\kappa-1}(b-a)$ for any $\kappa>1$ and
 $b \ge a \ge 0$. Further we have 
 \[
 I \le \max (2^{\alpha_1-2},1)\, \{\E D_0^{\alpha_1} 
 +\E (A_{1,0} W_{1,-1})^{\alpha_1-1}D_0\},
 \] 
 where we use the Minkowski's inequality and subadditivity of concave functions depending on
  whether $\alpha_1 > 2$ or $\alpha_1\le 2$, and we need to prove that
 \begin{align*}
  & \E(A_{1,0} W_{1,-1})^{\alpha_1-1}D_0 \\ 
  &= \E(A_{1,0} W_{1,-1})^{\alpha_1-1}(B_{1,0}+A_{2,0}
  W_{2,-1}) \\
  & = \E(A_{1,0} W_{1,-1})^{\alpha_1-1}B_{1,0}+
  \E(A_{1,0} W_{1,-1})^{\alpha_1-1} A_{2,0}W_{2,-1}<\infty. 
 \end{align*}
Since $(A_{1,0},B_{1,0})$ and $W_{1,-1}$ are independent and since 
$A_{1,0}^{\alpha_1-1}A_{2,0}$ and $(W_{1,-1})^{\alpha_1-1}W_{2,-1}$
  are independent, what we need to show is 
 \[
  \E [A_{1,0}^{\alpha_1-1}B_{1,0}] \E W_{1,0}^{\alpha_1-1} +
  \E[A_{1,0}^{\alpha_1-1}A_{2,0}]\E [(W_{1,-1})^{\alpha_1-1}W_{2,-1}]<\infty.
 \] 
By H\"older's inequality, for $X=B_{1,0}$ or $A_{2,0}$ we have 
\[
 E[A_{1,0}^{\alpha_1-1}X] \le (\E A_{1,0}^{p(\alpha_1-1)})^{1/p}(\E X^q)^{1/q} =
  (\E A_{1,0}^{\alpha_1})^{1/p}(\E X^{\alpha_1})^{1/q}<\infty 
\]
with $p=\alpha_1/(\alpha_1-1)$ and $q=\alpha_1$. This together with
$\E W_{1,0}^{\alpha_1-1}<\infty$ shows
  $\E[A_{1,0}^{\alpha_1-1}B_{1,0}]\E W_{1,0}^{\alpha_1-1}<\infty$. 
Therefore it suffices to see $\E[(W_{1,-1})^{\alpha_1-1}W_{2,-1}]<\infty$.
  For any positive $\varepsilon<\alpha_2-\alpha_1$ we can deduce that $\E
  W_{2}^{\alpha_1+\varepsilon}<\infty$ by the property
  \eqref{weq:tails}. Let $q=\alpha_1+\varepsilon$ and its H\"older
  conjugate $p=q/(q-1)$. Then
 \[
  \E \big[
 (W_{1,-1})^{\alpha_1-1}W_{2,-1}
 \big] \le \big(
 \E W_{1,0}^{p(\alpha_1-1)}
 \big)^{1/p} (\E W_{2,0}^q)^{1/q}, 
 \] 
 where $\E W_2^q<\infty$. 
 Notice that $\beta:=p(\alpha_1-1)<\alpha_1$ because $p$ is decreasing
  in $q$ and $p=\alpha_1/(\alpha_1-1)$ implies $q=\alpha_1$. Here we may
  choose $\varepsilon$ such that $1<\beta$.
 Then by convexity $\E A_1^\beta<1$ and moreover, since
  $1<\beta<\alpha_1<\alpha_2$, for $k\in \Z$ 
\[
 \E D_k^\beta \le \big(
 (\E B_{1,k}^\beta)^{1/\beta}+(\E (A_{2,k}W_{2,k-1})^{\beta} )^{1/\beta}
 \big)^\beta <\infty.
\] 
Now, notice that $\E W_{1,0}^\beta =\E
  (\sum_{i=0}^\infty \Pi_{0,1-i}^{(1)} D_{-i} )^\beta $ 
and so by 
Minkowski's inequality, it is enough to prove that 
\[
 \sum_{i=0}^\infty \big(
 \E (\Pi_{0,1-i}^{(1)} D_{-i})^\beta
 \big)^{1/\beta} = \sum_{i=0}^\infty (\E A_{1}^\beta)^{i/\beta}
  (\E D^\beta)^{1/\beta} <\infty,
\]
which holds, because $\E A_{1}^\beta <1$. 
\end{proof}
\subsection{Regular variation}
Now we are going to study regular variation of the strictly stationary time series $(\bfW_t)=((W_{1,t},W_{2,t})')$. As before, we distinguish two cases: 
$\alpha_1<\alpha_2$ and $\alpha_1>\alpha_2$. In the first case the tail indices of $(W_{i,t}),\,i=1,2$ are distinct so we consider the components separately.
 
Let us start with discussing regular variation. A univariate time series is said to be regularly varying if its
finite-dimensional distributions are such. The latter is meant in the sense of
 \eqref{regular}. 
  More precisely, let $\bfX$ be an $h$-dimensional r.v.\ It is called {\it multivariate regularly
 varying with index $\alpha$} if 
 \begin{align}
 \label{eq:mrv1}
  \frac{\P(|\bfX|>ux,\,\bfX/|\bfX|\in \cdot)}{\P(|\bfX|>x)}
  \stackrel{v}{\to} u^{-\alpha}\P(\bfTh\in \cdot),\quad u>0,
 \end{align}
 where $\stackrel{v}{\to}$ denotes vague convergence and $\bfTh$ is a
 random vector on the
 unit sphere $\bbs^{h-1}=\{\bfx\in \R^h \mid |\bfx| =1 \}$.
\footnote{Note that in the univariate case, 
we say that a positive measurable function $f(x)$ is regularly varying with index
 $\rho$ if $\lim_{x\to \infty}f(cx)/f(x)=c^\rho,\,c>0$. Moreover,
 r.v. $X$ is said to be regularly varying with index $\alpha>0$ if
 $f(x)=\P(|X|>x)$ is regularly varying with index $-\alpha$, see \cite[p.273]{buraczewski:damek:mikosch:2016}. A
 similar definition is used for the multivariate case, see
 \cite[p.279]{buraczewski:damek:mikosch:2016}.   
}
 Its distribution is called the spectral measure of the regularly varying vector 
 $\bfX$. This type of approach to determine the tail behavior of a
 univariate strictly stationary series was introduced by Davis and
Hsing \cite{davis:hsing:1995} and was 
used by e.g.\  
 Mikosch and St\u{a}ric\u{a} \cite{mikosch:starica:2000}. See also \cite{buraczewski:damek:mikosch:2016}, page 273.
 

To characterize the regular variation of $(W_{i,t}),\,i=1,2$,  we 
use the following notation for $h\ge 1,\,i=1,2,\,j_1=1$ and $j_2=4$: 
\begin{align}
 \label{eq:wpr}
 \bfW_{i,h} =(W_{i,1},\ldots,W_{i,h}),\quad
 {\bf \Xi}_h^{(i)}=(\Pi_{1}^{(j_i)},\ldots,\Pi_{h}^{(j_i)}),\quad
 \text{and}\quad \bfR_{i,h}=(R_{i,1},\ldots,R_{i,h}), 
\end{align}
 where $\Pi_\ell^{(i)}=\Pi_{\ell,1}^{(i)}$ and  
\begin{align*}
 R_{1,t} = \sum_{i=0}^{t-1} \Pi_{t,t+1-i}^{(1)} 
 D_{t-i}
 \quad \mathrm{and}\quad 
 R_{2,t} = \sum_{i=0}^{t-1} \Pi_{t,t+1-i}^{(4)} 
 B_{2,t-i}
 ,\,t\ge 1. 
\end{align*}
\begin{lemma}
 \label{lem:differenttail}
 Suppose that $\alpha_1<\alpha_2$ and the conditions of Theorem \ref{thmunitail} are satisfied. Then strictly stationary series $(W_{1,t})$ and $(W_{2,t})$ are
 regularly varying with indices $\alpha_1$ and $\alpha_2$
 respectively and the spectral measures for finite dimensional vectors
 $(W_{i,1},\ldots,W_{i,h}),\,h\ge1,\,i=1,2$ are 
 \begin{align*}
  \P(\bfTh_{i,h}\in \cdot)= \frac{\E |{\bf \Xi}_h^{(i)}|^{\alpha_i}\,
  {\bf 1}\, ( {\bf \Xi}_h^{(i)}/|{\bf \Xi}_h^{(i)}| \in \cdot
  )}{\E|{\bf \Xi}_h^{(i)}|^{\alpha_i} }, 
 \end{align*}
 where r.v.'s $\bfTh_{i,h}$ take values on $\bbs^{h-1}$. 
\end{lemma}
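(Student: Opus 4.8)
The plan is to represent each finite-dimensional vector as a heavy scalar multiplied by an independent random direction, up to an asymptotically negligible remainder. Iterating the scalar recursions \eqref{uniSRE1} and \eqref{uniSRE2} forward from time $0$ gives $W_{i,t}=\Pi_t^{(j_i)}W_{i,0}+R_{i,t}$ for $t=1,\dots,h$ (with $j_1=1$, $j_2=4$), and stacking these $h$ identities produces
\begin{align*}
 \bfW_{i,h}=W_{i,0}\,{\bf \Xi}_h^{(i)}+\bfR_{i,h},\qquad i=1,2 .
\end{align*}
Here ${\bf \Xi}_h^{(i)}=(\Pi_1^{(j_i)},\dots,\Pi_h^{(j_i)})$ depends only on $A_{j_i,1},\dots,A_{j_i,h}$ and is therefore independent of $W_{i,0}$, which is measurable with respect to the coordinates indexed by $t\le 0$. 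By Theorem \ref{thm:mainresult} (case $\alpha_1<\alpha_2$) the scalar $W_{1,0}$ is \regvary\ with index $\alpha_1$, and by \eqref{weq:tails} the scalar $W_{2,0}$ is \regvary\ with index $\alpha_2$.

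I would then pass the product $W_{i,0}\,{\bf \Xi}_h^{(i)}$ through the multivariate Breiman lemma: the product of an independent heavy scalar and a lighter random vector is \regvary\ with index $\alpha_i$, and a large value of $W_{i,0}$ aligns the vector with the direction ${\bf \Xi}_h^{(i)}/|{\bf \Xi}_h^{(i)}|$, so that the spectral measure is the $\alpha_i$-weighted law
\begin{align*}
 \frac{\E\big[\,|{\bf \Xi}_h^{(i)}|^{\alpha_i}\,{\bf 1}\big({\bf \Xi}_h^{(i)}/|{\bf \Xi}_h^{(i)}|\in\cdot\big)\,\big]}{\E|{\bf \Xi}_h^{(i)}|^{\alpha_i}},
\end{align*}
which is exactly the claimed law of $\bfTh_{i,h}$. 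For $i=1$ the moment hypothesis $\E|{\bf \Xi}_h^{(1)}|^{\alpha_1+\delta}<\infty$ of that lemma is immediate: convexity of $s\mapsto\E A_1^{s}$ together with $\E A_1^{\alpha_1}=1$ and $\E A_1^{\alpha_2}<1$ (recall $\alpha_1<\alpha_2$) yields $\E A_1^{\alpha_1+\delta}\le 1$ for all small $\delta>0$.

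Next I would discard $\bfR_{i,h}$. Each entry $R_{i,t}=\sum_{k=0}^{t-1}\Pi_{t,t+1-k}^{(j_i)}N_{t-k}$ is a finite sum of products in which the independent $\Pi$-factor satisfies $\E(\Pi^{(j_i)})^{\alpha_i}=1$ and the noise $N$ has a finite $\alpha_i$-th moment: indeed $\E B_2^{\alpha_2}<\infty$, while $\E D^{\alpha_1}\le c(\E B_1^{\alpha_1}+\E A_2^{\alpha_1}\,\E W_2^{\alpha_1})<\infty$ because $\E A_2^{\alpha_1}<\infty$ and $\alpha_1<\alpha_2$ forces $\E W_2^{\alpha_1}<\infty$. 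Minkowski's inequality (or subadditivity when $\alpha_i\le1$) then gives $\E|\bfR_{i,h}|^{\alpha_i}<\infty$, hence $\P(|\bfR_{i,h}|>x)=o(x^{-\alpha_i})$. Combining the dominant \regvary\ term with this negligible remainder through (the multivariate analogue of) Lemma B.6.1 of \cite{buraczewski:damek:mikosch:2016} delivers \regvar\ of $\bfW_{i,h}$ with the spectral measure above.

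The main difficulties are of two distinct kinds. For $i=1$ the noise sequence $(A_{1,t},D_t)$ is stationary but not i.i.d.\ (the $D_t$ inherit dependence across time from the $W_2$-process), so the off-the-shelf regular-variation theorem for i.i.d.-driven SREs is unavailable and, moreover, $\bfR_{1,h}$ is not independent of $W_{1,0}$; this is exactly why I route everything through the negligibility estimate and Lemma B.6.1, which tolerate dependence. For $i=2$ the delicate point is the boundary identity $\E A_4^{\alpha_2}=1$, which does not entail $\E|{\bf \Xi}_h^{(2)}|^{\alpha_2+\delta}<\infty$, so the generic Breiman lemma cannot be quoted verbatim. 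I expect this to be the crux: the way around is to observe that $\E|{\bf \Xi}_h^{(2)}|^{\alpha_2}<\infty$ already forces $\P(|{\bf \Xi}_h^{(2)}|>x)=o(x^{-\alpha_2})$, so the random scaling is strictly lighter than $W_{2,0}$ and a boundary version of Breiman's lemma applies, or, alternatively, to invoke the established regular-variation theorem for one-dimensional SRE solutions in the sense of Basrak and Segers \cite{basrak:segers:2009} and \cite{buraczewski:damek:mikosch:2016}.
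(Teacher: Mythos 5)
Your architecture mirrors the paper's proof exactly: write $\bfW_{i,h}={\bf \Xi}_h^{(i)}W_{i,0}+\bfR_{i,h}$, apply Breiman's lemma to the product ${\bf \Xi}_h^{(i)}W_{i,0}$ to produce the $\alpha_i$-weighted spectral measure, and kill the remainder through its finite $\alpha_i$-th moment. Your extra remarks --- that $\bfR_{1,h}$ is \emph{not} independent of $W_{1,0}$, so the perturbation step must tolerate dependence, and that $i=2$ sits at the Breiman boundary because $\E A_4^{\alpha_2}=1$ --- are correct and in fact more explicit than the paper, which simply cites \cite[Lemma C.3.1 (1)]{buraczewski:damek:mikosch:2016} twice.

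There is, however, one step that fails as written: your verification of the moment hypothesis for $i=1$. You claim that convexity of $s\mapsto \E A_1^s$, $\E A_1^{\alpha_1}=1$ and ``$\E A_1^{\alpha_2}<1$'' yield $\E A_1^{\alpha_1+\delta}\le 1$ for small $\delta>0$. In the present case $\alpha_1<\alpha_2$ the inequalities run the other way: the function $g(s)=\E A_1^{s}$ is convex with $g(0)=g(\alpha_1)=1$, so $g(s)\le 1$ holds precisely on $[0,\alpha_1]$, while $g(s)\ge 1$ for every $s>\alpha_1$ --- and $g(s)$ may even be infinite there, since the hypotheses \eqref{comptailcondi} guarantee no moment of $A_1$ of order exceeding $\alpha_1$. (The bound $\E A_1^{\alpha_2}<1$ you are recalling is the one used in the proof of Theorem \ref{thm:mainresult}, where it is valid because that argument is run in the opposite case $\alpha_2<\alpha_1$.) Hence $\E|{\bf \Xi}_h^{(1)}|^{\alpha_1+\delta}<\infty$ is not available, and the generic $(\alpha+\delta)$-moment Breiman lemma cannot be invoked for $i=1$ any more than for $i=2$. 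The repair is the one you yourself give for $i=2$: both coordinates must be handled by the boundary form of Breiman's lemma, which needs only $\E|{\bf \Xi}_h^{(i)}|^{\alpha_i}<\infty$ (here $\E(\Pi_t^{(1)})^{\alpha_1}=(\E A_1^{\alpha_1})^{t}=1$) together with an exactly Pareto-type tail of the independent scalar factor --- supplied for $W_{1,0}$ by Theorem \ref{thm:mainresult}, $\P(W_{1,0}>x)\sim \ov c_{1}x^{-\alpha_1}$, and for $W_{2,0}$ by \eqref{weq:tails}. This is precisely what the paper's citation of \cite[Lemma C.3.1 (1)]{buraczewski:damek:mikosch:2016} encodes; with that substitution your proof is complete and coincides with the paper's.
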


 \begin{proof}
The proofs for both series are very similar, so we give the proof only for $(W_{1,t})$. 
  Since $W_{1,t}=\Pi_{t}^{(1)}W_{1,0}+R_{1,t},\,t=1,\ldots,h$ by
  induction, we have a representation $\bfW_{1,h}={\bf \Xi}_h^{(1)}W_{1,0}+\bfR_{1,h} $
  where both vectors ${\bf \Xi}_h^{(1)}$ and $\bfR_{1,h}$ have the moment of
  order $\alpha_1$. Due to the multivariate Breiman's lemma 
 \cite[Lemma C.3.1 (1)]{buraczewski:damek:mikosch:2016}, 
 we have 
 \[
  \lim_{x\to\infty}\frac{\P(x^{-1}\bfW_{1,h} \in \cdot)}{\P(W_{1}>x)}
  = \lim_{x\to\infty} \frac{\P(x^{-1}{\bf \Xi}_h^{(1)}W_{1}\in\cdot)}{\P(W_{1}>x)},
 \]
so that it suffices to study the regular variation of
  ${\bf \Xi}_h^{(1)}W_{1}$. Moreover, applying Breiman's lemma again, we obtain
 as $y\to \infty$, 
\begin{align*}
 \P\Big( |W_{1}{\bf \Xi}_h^{(1)}| >xy,\,
 \frac{{\bf \Xi}_h^{(1)}W_{1}}{|{\bf \Xi}_h^{(1)}W_{1}|}\in \cdot
\Big) &= \P\Big( W_{1}|{\bf \Xi}_h^{(1)}|\, {\bf 1}\, ( 
{\bf \Xi}_h^{(1)}/|{\bf \Xi}_h^{(1)}| \in \cdot
) >xy 
\Big) \\
 &\sim \E |{\bf \Xi}_h^{(1)}|^{\alpha_1}\, 
{\bf 1}\, ( 
 {\bf \Xi}_h^{(1)}/|{\bf \Xi}_h^{(1)}| \in \cdot
 ) x^{-\alpha_1}\P (W_{1}>y) 
\end{align*}
 and 
 \begin{align*}
  \P(|\bfW_{1,h}| >xy) \sim
  \E|{\bf \Xi}_h^{(1)}|^{\alpha_1} x^{-\alpha_1}\P(W_{1}>y).
 \end{align*}
Hence the conclusion follows. By the same logic the time series $(W_{2,t})$ is shown to be regularly
  varying with index $\alpha_2$ as a series. 
 \end{proof}
Secondly, we study the case $\alpha_2<\alpha_1$ where both component
processes have the same tail index $\alpha_2$, so we consider
a bivariate time series.
However, it is more convenient to modify slightly the definition of
regular variation i.e.\ to adopt the version better to the bivariate case as done
by Basrak and Segers \cite{basrak:segers:2009}.
 
 An $\R^d$-valued strictly stationary time
 series $(\bfX_t)$ is {\em \regvary\ 
with index $\alpha>0$} 
 if the following limits in \ds\ exist
\beam\label{eq:jan6a}
\P(x^{-1} (\bfX_0,\ldots,\bfX_h)\in \cdot\mid |\bfX_0|>x )\stw
\P( (\bfY_0,\ldots,\bfY_h)\in \cdot)\,,\qquad \xto\,,
\eeam 
where $\stw$ denotes weak convergence. The limit vector $(\bfY_0,\ldots,\bfY_h)$ has the same \ds\ as 
$|\bfY_0|(\bfTh_0,\ldots,\bfTh_h)$, where the \ds\ of $|\bfY_0|$ is given by
$\P(|\bfY_0|>y)=y^{-\alpha}$, $y>1$, and $|\bfY_0|$ and 
$(\bfTh_0,\ldots,\bfTh_h)$ are independent. The distribution of $\bfTh_0$ is the spectral measure of $\bfX_0$ and $(\bfTh_t)_{ t\ge 0}$ is the
spectral process.
Notice that $\bfTh_t,\,t\neq 0$ is not always on $\bbs^{d-1}$. 
 The equivalence of
definitions \eqref{eq:mrv1} and \eqref{eq:jan6a} is proved in
\cite{basrak:segers:2009} but \eqref{eq:jan6a} is usually easier to handle see e.g.\ \cite{matsui:mikosch:2016}. 



\begin{proposition}
\label{prop:fidm2}
 Assume that $\alpha_1>\alpha_2>0$ and that the conditions of Theorem \ref{thmunitail} are satisfied.  Then the
 bivariate strictly stationary series $\bfW_t=(W_{1,t},W_{2,t})'$ is
 regularly varying with index $\alpha_2$ in the sense of
 \eqref{eq:jan6a} and 
 \begin{align}
  \label{eq:spectralpros}
  \P\big(x^{-1}(\bfW_1,\ldots,\bfW_h)\in\cdot \mid
 |\bfW_0|>x \big) \stackrel{w}{\to} \P\big(
 Y ({\bf\Pi}_1\bfTh_0,\ldots,{\bf\Pi}_h\bfTh_0)\in \cdot
 \big),\quad h\ge 1, 
 \end{align}
where $\P(Y>x)=x^{-\alpha_2},x>1$, $Y$ is independent of $(\bfTh_0,
 {\bf\Pi}_1,\ldots,{\bf\Pi}_h)$. The distribution of  
 $\bfTh_0$ has the spectral measure of $\bfW_0$. Here $\bfTh_0$ and
 $({\bf\Pi}_1,\ldots,{\bf\Pi}_h)$ are also independent. 
\end{proposition}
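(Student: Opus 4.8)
The plan is to reduce the finite-dimensional statement \eqref{eq:spectralpros} to the regular variation of the single vector $\bfW_0$ by conditioning on the driving noise with positive indices. Iterating the SRE \eqref{bivSRE} forward from time $0$ gives, for $t=1,\dots,h$,
\[
\bfW_t=\bfPi_t\bfW_0+\bfR_t,\qquad \bfR_t:=\sum_{j=1}^t\bfPi_{t,j+1}\bfB_j .
\]
The structural fact I would exploit is that $\bfW_0=\sum_{i\le 0}\bfPi_{0,i+1}\bfB_i$ depends only on $(\bfA_j,\bfB_j)_{j\le 0}$, whereas $G:=(\bfPi_1,\dots,\bfPi_h,\bfR_1,\dots,\bfR_h)$ depends only on $(\bfA_j,\bfB_j)_{1\le j\le h}$; since the driving sequence is i.i.d., $\bfW_0$ and $G$ are independent. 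It then suffices to test \eqref{eq:spectralpros} against an arbitrary bounded continuous $f$ on $(\R^2)^h$, to write the conditional expectation as an integral over the law of $G$, and to analyse the inner conditional expectation for almost every frozen realisation of $G$.

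The key input I would invoke is that $\bfW_0$ is multivariate regularly varying with index $\alpha_2$, i.e.
\[
\P\big(x^{-1}\bfW_0\in\cdot \,\big|\, |\bfW_0|>x\big)\stw \P(Y\bfTh_0\in\cdot),\qquad \xto ,
\]
where $\P(Y>y)=y^{-\alpha_2}$, $y\ge1$, $\bfTh_0\in\bbs^{1}$ has the spectral measure of $\bfW_0$, and $Y\perp\bfTh_0$. Granting this, fix a realisation of $G$. Because $\bfW_0\perp G$, conditioning on $G$ does not alter the law of $\bfW_0$, so $x^{-1}\bfW_0\stw Y\bfTh_0$ under $\{|\bfW_0|>x\}$; since the shifts $x^{-1}\bfR_t\to\bfO$ are deterministic and $\bfw\mapsto(\bfPi_1\bfw,\dots,\bfPi_h\bfw)$ is continuous, Slutsky's theorem and the continuous mapping theorem yield
\[
\frac{\E\big[f\big(x^{-1}(\bfW_1,\dots,\bfW_h)\big)\1(|\bfW_0|>x)\,\big|\,G\big]}{\P(|\bfW_0|>x)}\longrightarrow \E\big[f\big(Y(\bfPi_1\bfTh_0,\dots,\bfPi_h\bfTh_0)\big)\big]
\]
for almost every realisation of $G$. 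Each quotient is bounded by $\|f\|_\infty$ (again by $\bfW_0\perp G$), so bounded convergence lets me integrate this limit over the law of $G$; the independence $(Y,\bfTh_0)\perp(\bfPi_1,\dots,\bfPi_h)$ inherited from $\bfW_0\perp G$, together with $Y\perp\bfTh_0$, reproduces exactly the right-hand side of \eqref{eq:spectralpros}. Retaining the coordinate $t=0$ in the same computation shows that the limit law in \eqref{eq:jan6a} exists, which is the asserted regular variation of $(\bfW_t)$ in the Basrak--Segers sense.

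I expect the genuine obstacle to be the key input itself: the multivariate regular variation of $\bfW_0$ and the identification of its spectral measure $\bfTh_0$. This does not follow from the one-dimensional tail asymptotics of Theorem \ref{thm:mainresult} in isolation, and it cannot be read off from a direct application of Breiman's lemma to the one-step recursion $\bfW_0=\bfA_0\bfW_{-1}+\bfB_0$, because the criticality $\E A_4^{\alpha_2}=1$ deprives $\bfA_0$ of the moment $\E\|\bfA_0\|^{\alpha_2+\delta}<\infty$ that Breiman requires. Instead I would extract it from the joint structure uncovered in the proof of Theorem \ref{thm:mainresult}, where $W_{1,0}$ is shown to be tail-equivalent to $\wt X=\sum_{i\ge1}\Pi_{0,2-i}^{(1)}A_{2,1-i}W_{2,-i}$, so that both coordinates of $\bfW_0$ are asymptotically governed by the same heavy-tailed sequence $(W_{2,-i})$; analysing the direction $\bfW_0/|\bfW_0|$ on $\{|\bfW_0|>x\}$ through this common driver produces $\bfTh_0$. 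Once this is in place the conditioning argument above is routine, the negligibility of the remainders $\bfR_t$ being automatic since each $\bfR_t$ is a.s.\ finite.
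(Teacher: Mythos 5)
Your reduction step is sound, and it essentially reproduces the paper's own reduction: the paper also iterates the SRE to write $(\bfW_1,\ldots,\bfW_h)=(\bfPi_1,\ldots,\bfPi_h)\bfW_0+(\bfR_1,\ldots,\bfR_h)$, notes that the remainder has a finite moment of order $\alpha_2$ and is independent of $\bfW_0$, and then invokes the multivariate Breiman lemma (twice) to transfer regular variation from $\bfW_0$ to the lagged vector. Your conditioning-on-$G$ argument with Slutsky, continuous mapping and bounded convergence is in effect a hand-made proof of that Breiman lemma in this setting, which is fine.

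The genuine gap is exactly where you flag it and then stop: the multivariate regular variation of $\bfW_0$ itself, together with the existence of its spectral measure $\bfTh_0$. This is not a ``key input'' that can be granted; it is the substance of the proposition, and the paper spends essentially its entire proof establishing it. Your sketch --- ``analysing the direction $\bfW_0/|\bfW_0|$ through the common driver $(W_{2,-i})$ produces $\bfTh_0$'' --- names the right heuristic but omits both of the ingredients that make it a proof. First, one must show that for \emph{every} $\bfy\in\R^2$ the limit $w(\bfy)=\lim_{x\to\infty}\P(\bfy'\bfW_0>x)/\P(W_2>x)$ exists; this forces one to redo the truncation analysis of Theorem \ref{thm:mainresult} \emph{jointly} in both coordinates, i.e.\ to expand $W_{2,0}$ backwards to the same horizon $s$ as $\wt X_{s,1}$, so that $y_1\wt X_{s,1}+y_2\Pi^{(4)}_{0,1-s}W_{2,-s}=J(\bfy;s)W_{2,-s}$ with $J(\bfy;s)$ independent of $W_{2,-s}$, and then to control the error events (the sets $M'_s$, $M''_s$ in the paper), prove uniform moment bounds on $J(\bfy;s)$, use \eqref{limitto01} to kill the tail $\wt X^s$, and run a subsequence argument to get convergence of $w_s(\bfy)$. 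Marginal tail equivalence of $W_{1,0}$ with $\wt X$ does not by itself give the joint asymptotics of $y_1W_{1,0}+y_2W_{2,0}$. Second, even once $w(\bfy)$ exists for all $\bfy$, linear-combination asymptotics do not automatically yield multivariate regular variation in the sense of \eqref{eq:mrv1} (this is delicate precisely when $\alpha_2$ is an integer); the paper closes this by verifying $w(\bfy)>0$ on $[0,\infty)^2\setminus\{\bfO\}$ and $w(\bfy)=0$ on $(-\infty,0)^2$ and appealing to the Cram\'er--Wold theorem of Boman and Lindskog \cite{boman:lindskog:2009}. Neither this tool nor any substitute for it appears in your proposal, so the object $\bfTh_0$ on which your entire second step operates is never actually constructed.
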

\begin{remark}
(i) The same characterization has been examined in 
\cite{matsui:mikosch:2016}.  \\
(ii) As seen above the law of $\bfTh_0$ is the most important in our
 case. Usually analytical expressions for the law of $\bfTh_0$ are not
 available. However, since the law of $\bfTh_0$ satisfies a certain
 invariant relation by index $\alpha_2$ and $\bfA$, we could simulate
 $\bfTh_0$ (see Proposition 5.1 in \cite{basrak:segers:2009}). We do not
 pursuit this here and only make the following remark. Although our
 setting for SRE are different from usual assumption, i.e.\ $\bfA$ is
 triangular, once bivariate regular variation for $\bfW$ has been proved,
 we can use the method in \cite{basrak:segers:2009}. (We checked the
 assumptions in Section 5 of \cite{basrak:segers:2009} and did not find
 any problems for our case.)
\end{remark}
\begin{proof}
 We show that for every $\bfy=(y_1,y_2)\in \R ^2 $,
 \begin{align}
 \label{def:rv1}
  \lim_{x\to\infty} \frac{\P(\bfy'\bfW_0>x)}{\P(W_{2}>x)}=w(\bfy)\quad
  \mathrm{exists}, 
 \end{align}
$$w(\bfy)>0\quad  \mbox{if}\quad \bfy \in[0,\infty)^2\setminus \{\bf0\}$$
and $$w(\bfy)=0\quad  \mbox{if}\quad  \bfy \in (-\infty , 0)^2.$$
Then by Boman and Lindskog \cite{boman:lindskog:2009}, see also \cite[Appendix C]{buraczewski:damek:mikosch:2016},
 we may conclude the regular variation of $\bfW_0$ in the
 sense of \eqref{eq:mrv1}. 
 In view of the proof of Theorem \ref{thmunitail}
 (\eqref{eq:expressX}, \eqref{eq:expressW2} and \eqref{eq:expressXs}), we recall that
 \begin{align*}
  W_{1,0}=\wt X+\wt W_{1,0} = \wt X_{s,1} + \wt X_{s,2} + \wt X^s+\wt
  W_{1,0}
  \quad \mathrm{and} \quad W_{2,0}=
  \Pi_{0,1-s}^{(4)}W_{2,-s}+\sum_{i=0}^{s-1}\Pi_{0,1-i}^{(4)}B_{2,-i}.  
 \end{align*}
Given $\varepsilon\in(0,1)$, let
\begin{align*}
M_{x,s}=& \{\,y_1\wt X_{s,1}+y_2\Pi_{0,1-s}^{(4)}W_{2,-s}> x\,\}, \\
M'_s =& \Big\{\,y_1(\wt W_{1,0}+\wt X_{s,2}+\wt X^s)+y_2
  \sum_{i=0}^{s-1}\Pi_{0,1-i}^{(4)}B_{2,-i}<-\varepsilon x \,\Big\}, \\
M''_s =&\Big\{\,y_1(\wt W_{1,0}+\wt X_{s,2}+\wt X^s)+y_2
  \sum_{i=0}^{s-1}\Pi_{0,1-i}^{(4)}B_{2,-i}>\varepsilon x \,\Big\}.
\end{align*}
Then
 \begin{equation*}
M_{(1+\varepsilon )x,s}\setminus M'_s \subset \{\,y_1
  W_{1,0}+y_2 W_{2,0}>x\,\} \subset M_{(1-\varepsilon )x,s}\cup  M''_s. 
\end{equation*}
First we notice that
\begin{align*}
 y_1\wt X_{s,1}+y_2 \Pi_{0,1-s}^{(4)}W_{2,-s}&=\Big( y_1 \sum_{i=1}^s
 \Pi_{0,2-i}A_{2,1-i}\Pi_{-i,1-s}^{(4)}+
y_2 \Pi_{0,1-s}^{(4)}\Big)W_{2,-s}\\
&=:J (\bfy;s)W_{2,-s}.
\end{align*}
And so 
 \begin{equation*}
  \lim_{x\to\infty} \frac{\P(y_1\wt
  X_{s,1}+y_2\Pi_{0,1-s}^{(4)}W_{2,-s}>x)}{\P(W_{2}>x)} =\E J
  (\bfy;s)^{\a _2}
\1(J (\bfy;s)>0)=:w_s(\bfy).
\end{equation*}
Moreover, $w_s(\bfy)$ is bounded independently of $s$. Indeed, for $\a _2> 1$, by the Minkowski inequality, we have
 \begin{align*}
 \E J (\bfy;s)^{\alpha_2} 
 &\le \Big\{
 |y_1| \sum_{i=1}^s \Big(\E\big(
 \Pi_{0,2-i}A_{2,1-i}\Pi_{-i,1-s}^{(4)}\big)^{\alpha_2}\Big)^{1/\alpha_2}+
 |y_2| \Big(\E\big(\Pi_{0,1-s}^{(4)}\big)^{\alpha_2}\Big)^{1/\alpha_2}
 \Big\}^{\alpha_2} \\
 & \le \Big\{
 |y_1| \E A_{2}^{\alpha_2} \sum_{i=1}^s (\E
  A_{1}^{\alpha_2})^{(i-1)/\alpha_2} +|y_2|
 \Big\}^{\alpha_2} <\infty. 
 \end{align*}
For $\alpha_2\le 1$ 
 \[
  \E J (\bfy;s)^{\alpha_2} \le |y_1|^{\alpha_2}\E A_{2}^{\alpha_2} \sum_{i=1}^s (\E A_{1}^{\alpha_2})^{i-1}+|y_2|^{\alpha_2}<\infty,
 \]
 which follows from the triangle inequality.
Now we have
\begin{align*}
\limsup_{x\to\infty} \frac{\P(\bfy' 
  \bfW_0>x)}{\P(W_{2}>x)} &\le \limsup_{x\to\infty}\frac{\P (M_{x(1-\varepsilon),s})}{\P(W_{2}>x)}+\limsup_{x\to\infty}\frac{\P (M ''_s)}{\P(W_{2}>x)}\\
&\le (1-\varepsilon )^{-\alpha_2} 
  w_s(\bfy) + \limsup_{x\to\infty} \frac{\P(y_1\wt X^s
  >\varepsilon x)}{\P (W_{2}>x)}
 \end{align*}
and
\begin{align*}
\liminf_{x\to\infty} \frac{\P(\bfy'
  \bfW_0>x)}{\P(W_{2}>x)} &\ge \lim_{x\to\infty}\frac{\P (M_{x(1+\varepsilon),s})}{\P(W_{2}>x)}-\limsup_{x\to\infty}\frac{\P (M '_s)}{\P(W_{2}>x)}\\
& \ge (1+\varepsilon )^{-\alpha_2}
  w_s(\bfy) - \limsup_{x\to\infty} \frac{\P(|y_1|\wt X^s
  >\varepsilon x)}{\P (W_{2}>x)}. 
 \end{align*}
It follows from \eqref{limitto01} that the last term in these inequality vanishes. 

Then we take a subsequence $s_k$ such that $w_{s_k}(\bfy)$ is convergent and we obtain
\begin{equation*}
(1+\varepsilon )^{-\alpha_2}
  \lim _{k\to \infty}w_{s_k}(\bfy)\le \liminf_{x\to\infty} \frac{\P(\bfy'
  \bfW_0>x)}{\P(W_{2}>x)}\le \limsup_{x\to\infty} \frac{\P(\bfy'
  \bfW_0>x)}{\P(W_{2}>x)}\le (1-\varepsilon )^{-\alpha_2}
  \lim _{k\to \infty}w_{s_k}(\bfy)
\end{equation*}
and so letting $\varepsilon \to 0$ we obtain \eqref{def:rv1}. Moreover, if $\bfy =(y_1,y_2)\in [0,\infty )^2\setminus \{ \bfO\}$ then 
\begin{equation*}
\bfy '\bfW \ge y_1\wt X + y_2 W_{2}\ge \max \{ y_1\wt X , y_2 W_{2}\} 
\end{equation*}
and since both $\wt X, W_{2}$ are regularly varying with index $\a _2$,
$\lim _{k\to \infty}w_{s_k}(\bfy)>0$.


 Next we see \eqref{eq:spectralpros}. By induction \[
 \bfW_t = \bfPi_t \bfW_0+\bfR_t,
\]
where
${\bf\Pi}_t={\bfA}_t\cdots{\bfA}_1,\,\bfR_t=\sum_{i=1}^{t-1}\bfPi_{t,t-i+1}\bfB_{t-i}$
for $t\ge1$, and all vectors are column vectors. With this
interpretation we write 
\begin{align}
\label{eq:vSRE}
 (\bfW_1,\ldots,\bfW_h) = (\bfPi_1,\ldots,\bfPi_h)\bfW_0+(\bfR_1,\ldots,\bfR_h),
\end{align}
 where $(\bfPi_1,\ldots,\bfPi_h),\,(\bfR_1,\ldots,\bfR_h)$ have moment
 of order $\alpha_2$ with respect to the matrix norm and are independent
 of $\bfW_0$. Indeed, for all $t=1,\ldots,h$
 $\E|\bfPi_t|^{\alpha_2}<\infty$ and $\E|\bfR_t|^{\alpha_2}<\infty$. Due
 to Minkowski's and triangle inequalities, this implies that each random
 component in two matrices has $\alpha_2$ th moment. Thus $\alpha_2$ th
 moment with the matrix norm follows. 
Hence 
 \begin{align*}
  \lim_{x\to\infty} x^{\alpha_2}\P(|(\bfW_1,\ldots,\bfW_t)-(\bfPi_1,\ldots,\bfPi_t)\bfW_0|>x)=0
 \end{align*}
 holds 
 and an application of Breiman's lemma (\cite[Lemma C.3.1
 (1)]{buraczewski:damek:mikosch:2016}) concludes that
 $(\bfW_1,\ldots,\bfW_t)$ and $(\bfPi_1,\ldots,\bfPi_t)\bfW_0$ have the
 same tail behavior and are regularly varying with index
 $\alpha_2$. Finally, \eqref{eq:spectralpros} is concluded from the
 regular variations of 
 $(\bfPi_1,\ldots,\bfPi_t)\bfW_0$ and $\bfW_0$ together with the
 multivariate Breiman's
 lemma (\cite[Lemma C.3.1 (2)]{buraczewski:damek:mikosch:2016}) again. 
\end{proof}

\begin{remark}
 $($i$)$ In Lemma \ref{lem:differenttail}, although each component
 process has regular variation, we do not have a device to characterize
 the joint regular variation with different tail indices i.e.\ we
 could not characterize tail dependence between processes with different
 tail indices. Therefore
 we only provide that for coordinate-wise series. \\
 $($ii$)$ Regarding Proposition \ref{prop:fidm2} even
 $\alpha_2>\alpha_1>0$, it is possible to obtain the bivariate Basrak
 Segers limit representation \eqref{eq:spectralpros}, but then the spectral measure lies on one
 axis i.e.\ vectors $(W_{1,t},W_{2,t})$ has the same spectral behavior
 as $(W_{1,t},0)$. This is not desirable since in applications tail asymptotics of
 the both series are crucial. This is the reason why we adopt Lemma
 \ref{lem:differenttail}. 
\end{remark}

\subsection{Upper and lower bounds for constants}
Although we have expressions for constants $\ov c_{1}$ and $\wt c_{1}$ in Theorem \ref{thm:mainresult},
a direct numerical calculation of the quantities seems difficult. A further
research is needed for possible calculation of the constants as it is done in
\cite{mikosch:samorodnitsky:tafakori}. $\ov c_{1}$ of
\eqref{rep:constant2} is treated there but with i.i.d.\ 
sequence $(A_t,D_t)$. It remains to be
seen whether the method is applicable in our case or not. 
Alternatively, we derive the upper and lower bounds for $\wt c_{1}$. 

\begin{lemma}
 Assume the conditions of Theorem \ref{thm:mainresult} with $\alpha_1>\alpha_2$. If $\alpha_2>1$ then 
\[
 \E A_{2}^{\alpha_2} \le \wt c_{1} \le 
 (1-\tau^{1/\alpha_2})^{-\alpha_2}\, \E A_{2}^{\alpha_2}, 
\]
where $\tau=\E A_{1}^{\alpha_2}$ and if $\alpha_2 \le 1$ then 
\[
 (1-\tau^{1/\alpha_2})^{-\alpha_2}\, \E A_{2}^{\alpha_2} \le \wt
 c_{1} \le (1-\tau)^{-1}\,\E A_{2}^{\alpha_2}. 
\]
\end{lemma}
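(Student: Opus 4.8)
The plan is to reduce the lemma to two-sided estimates on the single quantity
\[
w := \E\Big(\lim_{s\to\infty} S_s\Big)^{\alpha_2}, \qquad S_s := \sum_{i=1}^s \Pi_{0,2-i}^{(1)}\,A_{2,1-i}\,\Pi_{-i,1-s}^{(4)}.
\]
Since $\wt c_1 = c_2\,w$ by \eqref{rep:constant1}, the asserted inequalities are precisely two-sided bounds on $w$ (equivalently on $\wt c_1/c_2$), so it suffices to estimate $w$ from above and below; recall also from the proof of Theorem \ref{thm:mainresult} that $w=\lim_{s\to\infty}w_s$ with $w_s:=\E S_s^{\alpha_2}$. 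The key preliminary computation is the $\alpha_2$-th moment of a single summand $T_{i,s}:=\Pi_{0,2-i}^{(1)} A_{2,1-i}\Pi_{-i,1-s}^{(4)}$. Here the factor $A_{2,1-i}$ carries the time index $1-i$, which lies strictly between the $A_4$-block (times $1-s,\dots,-i$) and the $A_1$-block (times $2-i,\dots,0$); hence the three blocks use disjoint i.i.d.\ matrices and are independent, so that
\[
\E T_{i,s}^{\alpha_2}=(\E A_1^{\alpha_2})^{\,i-1}\,\E A_2^{\alpha_2}\,(\E A_4^{\alpha_2})^{\,s-i}=\tau^{\,i-1}\,\E A_2^{\alpha_2},
\]
using $\E A_4^{\alpha_2}=1$ and $\tau=\E A_1^{\alpha_2}<1$ (the latter by convexity of $h\mapsto\E A_1^h$, since $\E A_1^{\alpha_1}=1$ and $\alpha_2<\alpha_1$).

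For the upper bounds I would split according to $\alpha_2$. When $\alpha_2>1$, Minkowski's inequality gives $(\E S_s^{\alpha_2})^{1/\alpha_2}\le\sum_{i=1}^s(\E T_{i,s}^{\alpha_2})^{1/\alpha_2}=(\E A_2^{\alpha_2})^{1/\alpha_2}\sum_{i=1}^s\tau^{(i-1)/\alpha_2}\le(\E A_2^{\alpha_2})^{1/\alpha_2}(1-\tau^{1/\alpha_2})^{-1}$, whence $w_s\le(1-\tau^{1/\alpha_2})^{-\alpha_2}\E A_2^{\alpha_2}$ for every $s$. When $\alpha_2\le1$, subadditivity of $x\mapsto x^{\alpha_2}$ yields $S_s^{\alpha_2}\le\sum_{i=1}^s T_{i,s}^{\alpha_2}$, so $w_s\le\sum_{i=1}^s\tau^{i-1}\E A_2^{\alpha_2}\le(1-\tau)^{-1}\E A_2^{\alpha_2}$. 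In both cases the bound is uniform in $s$ and thus passes to $w=\lim_s w_s$.

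The lower bounds are handled in the same spirit but from below. When $\alpha_2>1$ it is enough to retain the first summand: $S_s\ge T_{1,s}$ gives $w_s\ge\E T_{1,s}^{\alpha_2}=\E A_2^{\alpha_2}$, hence $w\ge\E A_2^{\alpha_2}$. When $\alpha_2\le1$ I would invoke the reverse Minkowski inequality for nonnegative variables (valid for exponents in $(0,1]$ and requiring no independence across the summands), which gives $(\E S_s^{\alpha_2})^{1/\alpha_2}\ge\sum_{i=1}^s(\E T_{i,s}^{\alpha_2})^{1/\alpha_2}=(\E A_2^{\alpha_2})^{1/\alpha_2}\sum_{i=1}^s\tau^{(i-1)/\alpha_2}$; letting $s\to\infty$ and raising to the power $\alpha_2$ yields $w\ge(1-\tau^{1/\alpha_2})^{-\alpha_2}\E A_2^{\alpha_2}$. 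Combining the four estimates gives the statement.

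The main obstacle — really the only genuinely non-routine point — is the lower estimate in the regime $\alpha_2\le1$, where the reverse Minkowski inequality must replace the ordinary one; one must also check carefully that the per-summand moment identity rests on the disjointness of the three time-index blocks (so that the within-term factors are independent) and on the normalization $\E A_4^{\alpha_2}=1$, which is exactly what removes the dependence of $\E T_{i,s}^{\alpha_2}$ on $s$. The passage from the uniform bounds on $w_s$ to $w$ is then immediate and needs no monotonicity of $S_s$ in $s$, since each inequality holds for all $s$ and the geometric series is summable because $\tau<1$.
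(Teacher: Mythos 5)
Your proof is correct and follows essentially the same route as the paper's: the same per-summand moment identity $\E T_{i,s}^{\alpha_2}=\tau^{i-1}\E A_2^{\alpha_2}$, Minkowski (resp.\ subadditivity of $x\mapsto x^{\alpha_2}$) for the upper bounds, the first-summand truncation for the lower bound when $\alpha_2>1$, and the reverse Minkowski inequality when $\alpha_2\le 1$. The only difference is one of exposition: you spell out the reverse Minkowski step and the interchange $w=\lim_s w_s$ (via the proof of Theorem \ref{thm:mainresult}), which the paper states without detail.
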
   

\begin{proof}
 {\bf Case $\alpha_1>1$.}\ Since the limit and expectation are
 interchangeable, we work on 
\[
 \lim_{s\to \infty} \E\,\big(
 \sum_{i=1}^s \Pi_{0,2-i}^{(1)} A_{2,1-i} \Pi_{-i,1-s}^{(4)}
\big)^{\alpha_2}. 
\]
 We take the first term ($i=1$) i.e.\ $\Pi_{0,1}^{(1)}A_{2,0}\Pi_{-1,1-s}^{(4)}=A_{2,0}A_{4,-1}\cdots
 A_{4,1-s}$ in the sum, so that we
 obtain the lower bound $\E
 (A_{2,0}\Pi_{-1,1-s}^{(4)} )^{\alpha_2}=\E
 A_{2}^{\alpha_2}$. As for the upper bound, by the Minkowski's inequality
 together with independence of $\Pi_{0,2-k}^{(1)}$, $A_{2,1-k}$ and
 $\Pi_{-k,1-s}^{(4)}$, $1\le k\le s$ we obtain
\begin{align*}
 \E\,\big(
 \sum_{i=1}^s \Pi_{0,2-i}^{(1)} A_{2,1-i} \Pi_{-i,1-s}^{(4)}
\big)^{\alpha_2} & \le \Big(
 \sum_{i=1}^s \big(
 \E (\Pi_{0,2-i}^{(1)} A_{2,1-i} \Pi_{-i,1-s}^{(4)})^{\alpha_2}
 \big)^{1/\alpha_2}
 \Big)^{\alpha_2}\\
 &= \Big(
 (
 \E A_{2}^{\alpha_2})^{1/\alpha_2} \sum_{i=1}^s \tau^{(i-1)/\alpha_2}
 \Big)^{\alpha_2} \\
 &= \E A_{2}^{\alpha_2}\, \left(
 \frac{1-\tau^{s/\alpha_2}}{1-\tau^{1/\alpha_2}}
 \right)^{\alpha_2},
\end{align*}
where $\E A_4^{\alpha_1}=1,\,\E A_1^{\alpha_2}<1$ and $\E
 A_2^{\alpha_2}<\infty$. This concludes the first result by taking limit in $s$. \\
{\bf Case $\alpha_2 \le 1$.} We apply the triangle inequality for the
 upper bound and obtain 
\begin{align*}
 \E\,\big(
 \sum_{i=1}^s \Pi_{0,2-i}^{(1)} A_{2,1-i} \Pi_{-i,1-s}^{(4)}
\big)^{\alpha_2} \le 
 \sum_{i=1}^s \E \big(
 \Pi_{0,2-i}^{(1)}A_{2,1-i} \Pi_{-i,1-s}^{(4)}
 \big)^{\alpha_2} = \sum_{i=1}^s \tau^{i-1} \E A_{2}^{\alpha_2} = \frac{1-\tau^s}{1-\tau}\,\E A_{2}^{\alpha_2},
\end{align*}
which yields the result. The lower bound is implied by the reverse Minkowski's
 inequality. 
\end{proof}

\section{Application to bivariate \garch\ processes}
\label{aplication}
There are various extensions of a univariate GARCH
model to multivariate ones. We stick here to the 
{\em constant conditional correlation model} of Bollerslev
\cite{bollerslev:1990} and Jeanthequ \cite{jeantheau:1998}, which is
the most fundamental multivariate GARCH process. 
A bivariate series $\bfX_t=
(X_{1,t},X_{2,t})'$, $t\in \bbz$ has the
\garch\ structure if it satisfies:
\beam\label{eq:13}
 \bfX_t= \Sigma_t\, \bfZ_t\,,
\eeam
where $(\bfZ_t)$ constitutes an i.i.d.\ bivariate noise \seq\ and
\beao
\Sigma_t={\rm diag}(\sigma_{1,t},\sigma_{2,t})\,,
\eeao
with $\sigma_{i,t}$ being the (non-negative) volatility of $X_{i,t}$. 
We also assume that $\bfZ_t=(Z_{1,t},Z_{2,t})'$ has mean zero and its
covariance matrix (standard correlations) is
\begin{align*}
 P=\left(\barr{cc} 1 & \rho  \\
      \rho & 1 \earr\right),
\end{align*}
where $\rho=\mathrm{Corr}(Z_{1,t},Z_{2,t})$. 
The volatility process $\sigma_{i,t}$ is defined by the following
stochastic equation
\begin{align}
\begin{split}\label{eq:15}
\left(\barr{l}\sigma^2_{1,t}  \\  
\sigma^2_{2,t}\earr
\right)
=& \left(
\barr{l}\a_{01}  \\\a_{02}   \earr\right)
+\left(\barr{cc}\a_{11} & \a_{12}  \\
      \a_{21} & \a_{22}\earr \right)\, 
\left(\barr{l}X_{1,t-1}^2  \\X_{2,t-1}^2   \earr\right)
 + \left(\barr{cc}\b_{11} & \b_{12}  \\\b_{21} & \b_{22} \earr
 \right)\,\left(\barr{c}\sigma^2_{1,t-1}  \\\sigma^2_{2,t-1}\earr
  \right)\\
=& \left(
\barr{l}\a_{01}  \\\a_{02}   \earr\right)+\left(\barr{cc}\alpha_{11}Z_{1,t-1}^2+\beta_{11}&\alpha_{12}Z_{2,t-1}^2+
\beta_{12}\\
\alpha_{21}Z_{1,t-1}^2+\beta_{21}& \alpha_{22}Z_{2,t-1}^2+\beta_{22}
\earr\right)\,\left(\barr{l}\sigma_{1,t-1}^2\\\sigma_{2,t-1}^2\earr
\right),
\end{split}
\end{align}
where the second equality follows from \eqref{eq:13}.
Writing $\bfW_t=(\sigma^2_{1,t} \,,
\sigma^2_{2,t})'$, 
\begin{align}
\label{eq:20a}
\bfB_t=\left(\barr{c}\a_{01} \\ \a_{02} \earr\right) \quad \mathrm{and}\quad  
\bfA_t=\left(\barr{cc}\alpha_{11}Z_{1,t-1}^2+\beta_{11}&\alpha_{12}Z_{2,t-1}^2+
\beta_{12}\\
\alpha_{21}Z_{1,t-1}^2+\beta_{21}& \alpha_{22}Z_{2,t-1}^2+\beta_{22}
\earr\right):=\left(\barr{cc} A_{1,t} & A_{2,t} \\
A_{3,t} & A_{4,t}
\earr\right), 
\end{align}
we see that 
the process $(\bfW_t)$ is given by the SRE 
with vector-valued $\bfB_t$ and matrix-valued $\bfA_t$:
\beam
\label{eq:jan6b}
\bfW_t=\bfA_t\,\bfW_{t-1}+\bfB_t\,,\qquad t\in\bbz\,.
\eeam
There is a series of results about regular variation of GARCH processes. They are based on the Kesten theorem \cite{kesten:1973} and so, the tail indices of the component-wise series are always the same. We have in mind 
 Mikosch and \sta\ \cite{mikosch:starica:2000} for
 the univariate \garch\ model  and Basrak et
al~\cite{basrak:davis:mikosch:2002} 
for general univariate GARCH$(p,q)$ processes. 
The corresponding results for a vector \garch\ were obtained by  \sta\
\cite{starica:1999}. 
There is also a recent result by 
Fern\'andez and Muriel \cite{fernandez:muriel:2009}. Furthermore, tail dependencies for
bivariate \garch\ models could be captured by newly defined measure called 
(cross-) extremogram (Matsui and Mikosch
\cite{matsui:mikosch:2016}), which is proposed in \cite{davis:mikosch:2009} and further developed in
\cite{davis:mikosch:cribben:2012,davis:mikosch:zhao:2013}. 

However, in the financial models, we may be forced to go beyond Kesten's assumptions when some of the entries of $\bfA _t$ vanish. Then the results of the previous section become very handy and we are able to treat the \garch\ model with component-wise different extremes. Note that in Remark 3.2 of
\cite{matsui:mikosch:2016} 
the same assumption of upper triangle matrix was suggested for
component-wise different tail modeling, though they did not obtain the 
exact tail behavior. 

We assume that $\alpha_{21}=\beta_{21}=0$ in \eqref{eq:20a},
$\alpha_{0i}>0,\,i=1,2$ and $\alpha_{ij},\beta_{ij}>0$ for $(i,j) \neq (2,1)$. Then $\bfA _t$ becomes an upper triangular matrix and component-wise we have the following SREs  
\begin{align}
\label{compwisesres}
\begin{split}
 \sigma_{1,t}^2 &
 = A_{1,t}\sigma_{1,t-1}^2+D_t, \\ 
 \sigma_{2,t}^2 &= A_{4,t}\sigma_{2,t-1}^2+\alpha_{02},
\end{split}
\end{align}
where $D_t:=\alpha_{01}+A_{2,t}\sigma_{2,t-1}^2$.

Then the sufficient condition \eqref{epsilon_moment} for existence of the strictly stationary solution is:
there exists $\varepsilon >0$ such that 
\begin{align*}
 \max_{i}\,( \alpha_{ii}^\varepsilon \E
 Z_i^{2\varepsilon}+\beta_{ii}^\varepsilon ) <1\quad & \mathrm{if}\quad
 \varepsilon <1 \\
 \max_{i}\, ( \alpha_{ii}(\E
 Z_i^{2\varepsilon})^{1/\varepsilon}+\beta_{ii} )<1\quad & \mathrm{if}\quad
 \varepsilon \ge 1
\end{align*}
holds. Then by Theorem \ref{thm:mainresult} we have  

\begin{corollary}
\label{cor:fidm1}
 Consider the bivariate SRE \eqref{eq:jan6b} or equivalently SREs \eqref{compwisesres}. Assume that
 r.v.\ $\bfZ$ has Lebesgue density in $\R^2$ and
 there exist $\alpha_1,\alpha_2>0$ such that 
 \begin{align}
\label{cond:garchtail}
 \begin{split}
  \E A_{1}^{\alpha_1}=1\quad \mathrm{and} \quad \E A_{1}^{\alpha_1}\log^+
  A_{1} <\infty, \\
 \E A_{4}^{\alpha_2}=1\quad \mathrm{and} \quad \E A_{4}^{\alpha_2}\log^+
  A_{4} <\infty, 
 \end{split}
 \end{align}
 then
 \begin{align*}
 \P(\sigma_{1}^2 > x) \sim \Bigg \{
\begin{array}{ll}
\ov c_{1} x^{-\alpha_1} & \mathrm{if}\ \alpha_1
   < \alpha_2 \\
 \wt c_{1} x^{-\alpha_2} &\mathrm{if}\
   \alpha_1 > \alpha_2
\end{array}
\quad \mathrm{and}\quad \P(\sigma_{2}^2>x)\sim c_{2}x^{-\alpha_2}, 
 \end{align*}
 where the constants are given by \eqref{weq:tails} - \eqref{rep:constant1}.
\end{corollary}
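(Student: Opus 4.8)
The plan is to show that Corollary \ref{cor:fidm1} is a direct specialization of Theorem \ref{thm:mainresult} (for $\sigma_1^2$) and of the one-dimensional Theorem \ref{thmunitail} (for $\sigma_2^2$), so that the whole task reduces to checking that the GARCH-specified coefficients satisfy the hypotheses of those results. Under the assumption $\alpha_{21}=\beta_{21}=0$ the matrix $\bfA_t$ in \eqref{eq:20a} is upper triangular, and comparing \eqref{compwisesres} with \eqref{uniSRE1}--\eqref{uniSRE2} I would record the correspondences $A_1=\alpha_{11}Z_1^2+\beta_{11}$, $A_4=\alpha_{22}Z_2^2+\beta_{22}$, $A_2=\alpha_{12}Z_2^2+\beta_{12}$, together with $B_1=\alpha_{01}$ and $B_2=\alpha_{02}$, the latter two being deterministic positive constants.

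First I would confirm that the stationary solution exists: by the convexity argument used at the start of the proof of Theorem \ref{thm:mainresult}, condition \eqref{cond:garchtail} already yields \eqref{epsilon_moment} (equivalently, the displayed $\varepsilon$-moment condition just before the corollary holds), and \eqref{logB} is trivial since $B_1,B_2$ are constants. Next I would verify \eqref{comptailcondi}: the two equalities $\E A_1^{\alpha_1}=1$, $\E A_4^{\alpha_2}=1$ and the two $\log^+$-moment bounds are exactly \eqref{cond:garchtail}, while $\E B_1^{\alpha_1}=\alpha_{01}^{\alpha_1}<\infty$ and $\E B_2^{\alpha_2}=\alpha_{02}^{\alpha_2}<\infty$ hold automatically. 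The one genuinely nontrivial moment requirement of Theorem \ref{thm:mainresult} is $\E A_2^{\min(\alpha_1,\alpha_2)}<\infty$; here I would argue that $\E A_4^{\alpha_2}=1$ together with $(\alpha_{22}Z_2^2)^{\alpha_2}\le A_4^{\alpha_2}$ yields $\E Z_2^{2\alpha_2}<\infty$, whence $\E A_2^{\alpha_2}<\infty$, and for the case $\alpha_1<\alpha_2$ the lower-moment bound $\E Z_2^{2\alpha_1}\le 1+\E Z_2^{2\alpha_2}<\infty$ (splitting on $\{Z_2^2\le 1\}$ and $\{Z_2^2>1\}$) gives $\E A_2^{\alpha_1}<\infty$.

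It remains to check the non-arithmeticity of $\log A_1$ and $\log A_4$ and the non-degeneracy hypothesis~3 of Theorem \ref{thmunitail}. Since $\bfZ$ has a Lebesgue density on $\R^2$, each marginal $Z_i$ has a density, hence so do $Z_i^2$ on $(0,\infty)$ and their affine images $A_1,A_4$; consequently $\log A_1$ and $\log A_4$ are absolutely continuous and in particular non-arithmetic, and $\P(A_4=x(1-\alpha_{02}))=0<1$ for every $x$, so hypothesis~3 holds for the $\sigma_2^2$-recursion. With all hypotheses in place I would apply Theorem \ref{thm:mainresult} to $W_{1,0}=\sigma_1^2$ to obtain the stated two-regime tail with constants $\ov c_1$ and $\wt c_1$, and read off $\P(\sigma_2^2>x)\sim c_2 x^{-\alpha_2}$ directly from \eqref{weq:tails}. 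The main obstacle is not conceptual but bookkeeping: the only steps needing a short argument are the transfer of moments onto $A_2$ and the passage from the density of $\bfZ$ to non-arithmeticity, while every remaining condition is either assumed in \eqref{cond:garchtail} or immediate from the constancy of $B_1,B_2$.
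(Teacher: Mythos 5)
Your proposal is correct and follows essentially the same route as the paper: both reduce the corollary to checking the hypotheses of Theorem \ref{thm:mainresult}, noting that \eqref{comptailcondi} follows from \eqref{cond:garchtail} because $B_1,B_2$ are positive constants, transferring the moment condition to $A_2$ via the fact that its random component $Z_2^2$ is the same as in $A_4$, and deducing non-arithmeticity of $\log A_1$, $\log A_4$ from the Lebesgue density of $\bfZ$. The only cosmetic differences are that the paper gets $\E A_{2}^{\alpha_1}\le(\E A_{2}^{\alpha_2})^{\alpha_1/\alpha_2}<\infty$ by Jensen's inequality instead of your splitting argument, and your fixed-point equation for the non-degeneracy hypothesis contains a harmless algebra slip (the fixed point is $A_4=(x-\alpha_{02})/x$, not $x(1-\alpha_{02})$), which does not affect the conclusion since $A_4$ is absolutely continuous.
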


\begin{proof}
 We have to check the conditions of Theorem \ref{thm:mainresult}. Since
 each element of $\bfB_t$ is a positive constant, this together with
 condition \eqref{cond:garchtail} implies the condition
 \eqref{comptailcondi}. In view of \eqref{eq:20a}, $A_2$ has the
 $\alpha_2$th moment since the random component is the same as that in $A_4$.
 Then, since $\E A_{2}^{\alpha_1}<(\E
 A_{2}^{\alpha_2})^{\alpha_1/\alpha_2}<\infty$ for $\alpha_1<\alpha_2$
 and $\E A_{2}^{\alpha_2}<\infty$ for $\alpha_2<\alpha_1$ we have $\E
 A_{2}^{\min (\alpha_1,\alpha_2)}<\infty$. Obviously $\bfA$
 has Lebesgue density. Therefore all conditions are satisfied. 
\end{proof}

Now we are going to characterize regular variation of stationary \garch\
 process. We do not apply Lemma \ref{lem:differenttail} and Proposition \ref{prop:fidm2} directly because the corresponding  SRE is satisfied by the volatility vector not by the GARCH process itself. Therefore, some additional work is needed.
 First we assume that $\alpha_1<\alpha_2$ and for $h\ge 0$ and $i=1,2$
 we define the following lagged vectors. 
  \begin{align*}
  \bfX_{i,h} &=(X_{i,1},\ldots,X_{i,h}),\\
  \bfX_{i,h}^{(k)} &= (|X_{i,1}|^k,\ldots,|X_{i,h}|^k),\,k=1,2,\\
  \bfY_{1,h} &= (|Z_{1,1}| (\Pi_{1}^{(1)})^{1/2},\ldots,|Z_{1,h}|
  (\Pi_{h}^{(1)})^{1/2}), \\
   \bfY_{2,h} &= (|Z_{2,1}| (\Pi_{1}^{(4)})^{1/2},\ldots,|Z_{2,h}|
  (\Pi_{h}^{(4)})^{1/2}). 
 \end{align*}
 In what follows for a matrix or a vector $\bfA$ and a constant $\alpha>0$, 
 $\bfA^\alpha$ denotes component-wise $\alpha$th power of $\bfA$. 

\begin{proposition}
 \label{prop:spectral1}
Suppose that the conditions of Corollary 
 \ref{cor:fidm1} with $\alpha_1<\alpha_2$ are satisfied and that additionally $\bfZ$ is symmetric. Then random vector $\bfX_{i,h}$ is regularly varying with
 index $2\alpha_i,\,i=1,2$. The spectral measure is given by the
 distribution of the vector
 \begin{align}
 \label{eq:spectral1}
  (r_{i,1}\theta_{i,1},\ldots,r_{i,h}\theta_{i,h}), 
 \end{align}
 where r.v. $\bfTh_{i,h}=(\theta_{i,1},\ldots,\theta_{i,h})\in
 \bbs^{h-1}$ has distribution 
 \[
  \P(\bfTh_{i,h}\in\cdot)= \frac{\E|\bfY_{i,h}|^{2\alpha_i}\,
 {\bf1}\,(\bfY_{i,h}/|\bfY_{i,h}|\in \cdot)}{\E|\bfY_{i,h}|^{2\alpha_i}},
 \]
 and $(r_{i,t})$ is a sequence of the Bernoulli r.v.'s 
 independent of $\bfTh_{i,h}$ such
 that $\P(r_{i,t}=\pm 1)=0.5$. 
\end{proposition}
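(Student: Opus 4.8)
The plan is to transfer the regular variation of the squared-volatility vector $\bfW_{i,h}$ established in Lemma \ref{lem:differenttail} to the GARCH vector $\bfX_{i,h}$ through the identity $X_{i,t}=Z_{i,t}\sqrt{W_{i,t}}$. Since $\bfW_{i,h}$ is regularly varying with index $\alpha_i$ and its tail is carried by the radial factor $W_{i,0}$, I expect $\bfX_{i,h}$ to inherit regular variation of index $2\alpha_i$, with the heavy factor becoming $W_{i,0}^{1/2}$ and the angular part dictated by the independent multiplier $(Z_{i,t}(\Pi_t^{(j_i)})^{1/2})_{t=1}^h$, where $j_1=1,\ j_2=4$. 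Symmetry of $\bfZ$ will then split this multiplier into independent Rademacher signs and the magnitude vector $\bfY_{i,h}$, producing exactly the spectral law in \eqref{eq:spectral1}.

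First I would use the representation $W_{i,t}=\Pi_t^{(j_i)}W_{i,0}+R_{i,t}$ to write $X_{i,t}=Z_{i,t}\sqrt{\Pi_t^{(j_i)}W_{i,0}+R_{i,t}}$ and compare it with $\widetilde X_{i,t}:=Z_{i,t}(\Pi_t^{(j_i)})^{1/2}W_{i,0}^{1/2}$. The elementary bound $\sqrt{a+b}-\sqrt a\le\sqrt b$ gives $|X_{i,t}-\widetilde X_{i,t}|\le |Z_{i,t}|R_{i,t}^{1/2}$, so the whole approximation error is controlled by the vector $(|Z_{i,t}|R_{i,t}^{1/2})_t$. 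The crucial point, and the \textbf{main obstacle}, is to show this error is tail-negligible of order $o(x^{-2\alpha_i})$. This is where the hypothesis $\alpha_1<\alpha_2$ enters: for $i=1$ the remainder $R_{1,t}$ is built from the $W_2$-terms, hence has tail index $\alpha_2>\alpha_1$ and a finite moment of order $\alpha_1+\delta$; since $Z_{1,t}\perp R_{1,t}$ and $\E Z_1^{2\alpha_1+\delta'}<\infty$ (which follows from $\E A_1^{\alpha_1}\log^+A_1<\infty$ via finiteness of the right derivative of $h\mapsto\E A_1^h$ at $\alpha_1$), we obtain $\E(|Z_{1,t}|R_{1,t}^{1/2})^{2\alpha_1+\delta''}<\infty$, and Markov's inequality yields the required $o(x^{-2\alpha_1})$. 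For $i=2$ the remainder $R_{2,t}$ is a finite sum of products of $A_4$'s, which has a moment strictly beyond $\alpha_2$, giving the analogous bound.

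Next it remains to treat $\widetilde{\bfX}_{i,h}=W_{i,0}^{1/2}\,\mathbf{M}_{i,h}$, where $\mathbf{M}_{i,h}=(Z_{i,t}(\Pi_t^{(j_i)})^{1/2})_{t=1}^h$. Here $W_{i,0}$ depends only on $\{\bfZ_s:s\le-1\}$ while $\mathbf{M}_{i,h}$ depends only on $\{\bfZ_s:s\ge0\}$, so the two are \emph{independent}; moreover $W_{i,0}^{1/2}$ is regularly varying with index $2\alpha_i$ and $\E|\mathbf{M}_{i,h}|^{2\alpha_i+\varepsilon}<\infty$ (again using $\E A_{j_i}^{\alpha_i+\varepsilon/2}<\infty$ and $\E Z_i^{2\alpha_i+\varepsilon}<\infty$). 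The multivariate Breiman lemma \cite[Lemma C.3.1]{buraczewski:damek:mikosch:2016} then shows $\widetilde{\bfX}_{i,h}$ is regularly varying with index $2\alpha_i$ and limit measure proportional to $\E[|\mathbf{M}_{i,h}|^{2\alpha_i}\,\1(\mathbf{M}_{i,h}/|\mathbf{M}_{i,h}|\in\cdot)]$; since the perturbation from the previous step is of strictly smaller order, $\bfX_{i,h}$ is tail-equivalent to $\widetilde{\bfX}_{i,h}$ and hence regularly varying with the same index and limit measure.

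Finally I would identify this limit with \eqref{eq:spectral1}. Since the signs of the $Z_{i,t}$ do not affect the Euclidean norm, $|\mathbf{M}_{i,h}|=|\bfY_{i,h}|$; and because $\bfY_{i,h}$ depends on the noise only through the magnitudes $|Z_{i,t}|$ and the squares $Z_{\cdot,s}^2$, the symmetry of $\bfZ$ makes the sign vector $(\sign\, Z_{i,1},\ldots,\sign\, Z_{i,h})$ an i.i.d.\ Rademacher vector independent of $\bfY_{i,h}$. Factoring the signs out of the weighted angular measure turns it into the law of $(r_{i,1}\theta_{i,1},\ldots,r_{i,h}\theta_{i,h})$, with $\bfTh_{i,h}$ distributed as stated and $r_{i,t}$ independent $\pm1$ signs, which is exactly the claimed spectral measure.
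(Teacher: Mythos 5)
Your overall architecture coincides with the paper's proof: the square-root comparison $|X_{i,t}-\wt X_{i,t}|\le |Z_{i,t}|R_{i,t}^{1/2}$, tail-negligibility of this remainder, Breiman's lemma applied to the product of the heavy radial factor $\sigma_{i,0}=W_{i,0}^{1/2}$ with the independent multiplier, and symmetry of $\bfZ$ to extract i.i.d.\ Rademacher signs (the paper performs this last step on the vector of absolute values and then invokes Proposition 5.13 of \cite{davis:mikosch:basrak:1999}; your keeping the signs inside $\mathbf{M}_{i,h}$ is an immaterial variant). There is, however, a genuine flaw in your moment estimates: you repeatedly invoke moments strictly above the critical order --- $\E Z_1^{2\alpha_1+\delta'}<\infty$, $\E A_{j_i}^{\alpha_i+\varepsilon/2}<\infty$, ``$R_{2,t}$ has a moment strictly beyond $\alpha_2$'' --- and you justify them by the condition $\E A_{1}^{\alpha_1}\log^+A_{1}<\infty$. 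That implication is false: a logarithmic moment at the critical exponent never upgrades to a power moment beyond it, and the claimed ``right derivative of $h\mapsto \E A_1^h$ at $\alpha_1$'' need not even exist, since $\E A_1^h$ may be infinite for every $h>\alpha_1$. Concretely, if $A_1$ has density behaving like $a^{-\alpha_1-1}(\log a)^{-3}$ at infinity, then $\E A_1^{\alpha_1}\log^+A_1<\infty$ while $\E A_1^{\alpha_1+\delta}=\infty$ for all $\delta>0$; the hypotheses of Corollary \ref{cor:fidm1} permit exactly this, and similarly give no moment of $A_4$, $Z_2^2$, $\Pi^{(4)}_t$ or $R_{2,t}$ beyond the order $\alpha_2$ (recall $\E A_4^{\alpha_2}=1$). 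As written, both your negligibility step and your application of Breiman's lemma with the bump $\E|\mathbf{M}_{i,h}|^{2\alpha_i+\varepsilon}<\infty$ rest on unavailable assumptions.

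Both steps can be repaired, and this is what the paper's proof does: it uses only moments of the exact critical order, which do follow from the hypotheses ($\E Z_1^{2\alpha_1}<\infty$ because $A_1\ge \alpha_{11}Z_1^2$ and $\E A_1^{\alpha_1}=1$; $\E R_{1,t}^{\alpha_1}<\infty$ because $\alpha_1<\alpha_2$; $\E(\Pi_t^{(j_i)})^{\alpha_i}=1$). For the remainder, a finite moment of exact order $p=2\alpha_i$ already yields $x^{p}\,\P(|\cdot|>x)\le \E\big[|\cdot|^{p}\,\1(|\cdot|>x)\big]\to 0$, i.e.\ the required $o(x^{-2\alpha_i})$; Markov's inequality alone would only give $O(x^{-2\alpha_i})$, so the dominated-convergence refinement, not a moment bump, is the right tool. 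For Breiman's lemma, the $\varepsilon$-bump can be dispensed with here because the radial factor has a pure power tail, $\P(\sigma_{i,0}>x)\sim c\,x^{-2\alpha_i}$ by Corollary \ref{cor:fidm1}: then $\P(\sigma_{i,0}>x/y)/\P(\sigma_{i,0}>x)\le C\,y^{2\alpha_i}$ uniformly for $y\ge 1$ and $x$ large, so dominated convergence needs only $\E|\mathbf{M}_{i,h}|^{2\alpha_i}<\infty$. With these two corrections your argument goes through and coincides with the paper's proof.
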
 

\begin{proof}
  Since proofs for the series $(X_{1,t})$ and $(X_{2,t})$ are almost the
  same, it suffices to see that for $(X_{1,t})$.
 In view of \eqref{eq:wpr}, we may write 
 \begin{align*}
  \bfX_{1,h}^{(2)} &=
  (Z_{1,1}^{2}\sigma_{1,1}^2,Z_{1,2}^2\sigma_{1,2}^2,\ldots,Z_{1,h}^2
  \sigma_{1,h}^2) \\
 &= (Z_{1,1}^2 \Pi_{1}^{(1)},\ldots,Z_{1,h}^2
  \Pi_{h}^{(1)})\,\sigma_{1,0}^2
  +(Z_{1,1}^2R_{1,1},\ldots,Z_{1,h}^2R_{1,h}). 
 \end{align*}
 We recall that $Z_{1,t}^2$ and $\Pi_{t}^{(1)}$ are independent and have moment
 of order $\alpha_2$ so that $|Z_{1,t}|(\Pi_{t}^{(1)})^{1/2}$ has the $2
 \alpha_1$th moment. Moreover,  $D_k=\alpha_{01}+A_{2,k}\sigma_{2,k-1}^2$  in 
\[
 R_{1,t}=\sum_{i=1}^{t-1} A_{1,t}\cdots A_{1,t+1-i}D_{t-i}+D_t 
\]
 has moment of order $\alpha_2>\alpha_1$ and $Z_{1,t}^2$ and
 $R_{1,t}$ are independent. Therefore, Minkowski's or triangle inequality
 implies that $|Z_{1,t}|(R_{1,t})^{1/2}$ has moment of $2\alpha_1$. 
 Hence 
 \[
  \lim_{x\to\infty} x^{2\alpha_1}
 \P(|\bfX_{1,h}^{(1)}-\bfY_{1,h}\sigma_{1,0}|>x)=0 
 \]
 holds and an applying the multivariate Breiman's lemma we conclude
 that $\bfX_{1,h}^{(1)}$ and $\bfY_{1,h}\sigma_{1,0}$ have the same tail
 behavior and are regularly varying with index $2\alpha_1$. 
Now similarly as in the proof of
 Lemma \ref{lem:differenttail}, as $y\to\infty$
 \begin{align*}
  \frac{\P(|\bfX_{1,h}^{(1)}|>xy,\,\bfX_{1,h}^{(1)}/|\bfX_{1,h}^{(1)}|
  \in\cdot)}{\P(|\bfX_{1,h}^{(1)}|>y)} &\sim
  \frac{\P(\sigma_{1,0}|\bfY_{1,h}|>xy,\,
  \bfY_{1,h}/|\bfY_{1,h}|
  \in\cdot)}{\P(\sigma_{1,0} |\bfY_{1,h}|>y)} \\
 &\sim x^{-2 \alpha_1}
  \frac{\E|\bfY_{1,h}|^{2\alpha_1}\,{\bf1}\,(\bfY_{1,h}/|\bfY_{1,h}|
  \in\cdot )
  }{\E |\bfY_{1,h}|^{2\alpha_1}} \\
 & \sim x^{-2 \alpha_1}\P(\bfTh_{1,h}\in \cdot). 
 \end{align*}
 Write 
 \[
  \bfX_{1,h}=(\sign(Z_{1,1})|X_{1,1}|,\ldots,\sign(Z_{1,h})|X_{1,h}|),  
 \]
 then by symmetry of $\bfZ$, the sequence $(\sign(Z_{1,t}))$ is
 independent of $(|X_{1,t}|)$. Hence by Proposition 5.13 of
 \cite{davis:mikosch:basrak:1999}, $\bfX_{1,h}$ is regularly varying with index $2\alpha_1$ and the
 spectral measure is given by that of \eqref{eq:spectral1}. 
\end{proof}

 In the case $\alpha_1>\alpha_2$ we are interested in the spectral process
 as done in
 Proposition \ref{prop:fidm2}. However, we dare to use the spectral
 process by $\bfW_t=(W_{1,t},W_{2,t})'$ since this gives explicit
 expression and since the original definition
 may yield only closed form representation as in
 \cite{matsui:mikosch:2016}. 
 
\begin{proposition}\label{spectralgarch}
Assume that $\alpha_1>\alpha_2$ and the conditions of Corollary \ref{cor:fidm1} are satisfied. Let $h\ge 0$, then $(\bfX_t)$ is regularly varying with index $2\alpha_2$ in the sense of \eqref{eq:jan6a}. In particular, with
 $\bfW_t=(\sigma_{1,t}^2,\sigma_{2,t}^2)'$, we have 
 \begin{align}
 \label{eq:fidmg2}
  \P(x^{-1/2}(\bfX_1,\ldots,\bfX_h)\in \cdot \mid |\bfW_0|>x)\stackrel{w}{\to} 
 \P\big(V({\rm diag}(\bfPi_1\bfTh_0))^{1/2}\bfZ_1,\ldots,({\rm
  diag}(\bfPi_h\bfTh_0))^{1/2}\bfZ_h)\in \cdot \big),
 \end{align}
 where $\P(V>x)=x^{-2\alpha_2}$ for $x>1$ and $V$ is independent of
 $(\bfTh_0,(\bfPi_1,\ldots,\bfPi_h))$ and $(\bfZ_1,\ldots,\bfZ_h)$. 
\end{proposition}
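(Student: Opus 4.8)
The plan is to transfer the regular variation already established for the volatility vector in Proposition \ref{prop:fidm2} to the \garch\ process itself by means of the continuous mapping theorem. The starting point is the pointwise identity $X_{i,t}=\sigma_{i,t}Z_{i,t}$ together with $\sigma_{i,t}^2=W_{i,t}$, which gives $\bfX_t=({\rm diag}\,\bfW_t)^{1/2}\bfZ_t$. Two structural facts drive everything: first, $\bfW_t$ is a function of $\{\bfZ_s:s\le t-1\}$, so $\bfZ_t$ is independent of $\bfW_t$; second, and crucially, $\bfW_0$ is a function of $\{\bfZ_s:s\le -1\}$ and is therefore independent of the entire forward block $(\bfPi_1,\ldots,\bfPi_h,\bfR_1,\ldots,\bfR_h,\bfZ_1,\ldots,\bfZ_h)$, which is measurable with respect to $\{\bfZ_s:s\ge 0\}$. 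Because $\bfW$ is of order $x$ exactly when $\bfX$ is of order $x^{1/2}$, I condition on $|\bfW_0|>x$ and normalise $\bfX$ by $x^{1/2}$.

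First I would upgrade the conditional convergence of Proposition \ref{prop:fidm2} to a joint statement that also carries the innovations. Since the conditioning event $\{|\bfW_0|>x\}$ involves only $\bfW_0$, and $\bfW_0$ is independent of the forward block, conditioning does not alter the law of $(\bfPi_j,\bfR_j,\bfZ_j)_{j\le h}$ and merely reshapes $\bfW_0$: by regular variation of $\bfW_0$ one has $(|\bfW_0|/x,\bfW_0/|\bfW_0|)\mid\{|\bfW_0|>x\}\stw (Y,\bfTh_0)$ with $Y$ Pareto of index $\alpha_2$, $Y$ independent of $\bfTh_0$, and $\bfTh_0$ distributed as the spectral measure of $\bfW_0$. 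Writing $\bfW_t=\bfPi_t\bfW_0+\bfR_t$ and invoking the Breiman equivalence from Proposition \ref{prop:fidm2} (which lets me replace $\bfW_t$ by $\bfPi_t\bfW_0$ in the limit, as $\bfR_t/x\to 0$), I obtain
\[
\big(x^{-1}\bfW_1,\ldots,x^{-1}\bfW_h,\bfZ_1,\ldots,\bfZ_h\big)\,\big|\,\{|\bfW_0|>x\}\stw \big(Y\bfPi_1\bfTh_0,\ldots,Y\bfPi_h\bfTh_0,\bfZ_1,\ldots,\bfZ_h\big),
\]
where $Y$ is independent of $(\bfTh_0,\bfPi_1,\ldots,\bfPi_h,\bfZ_1,\ldots,\bfZ_h)$, while $\bfTh_0$ is independent of $(\bfPi_j,\bfZ_j)_{j\le h}$, and the coupling among the $\bfPi_j$ and the $\bfZ_j$ (note that $\bfZ_t$ still enters $\bfPi_{t+1},\ldots,\bfPi_h$) is transported intact, being carried entirely by the forward noise.

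Then I would apply the continuous mapping theorem with the map $g(\bfw_1,\ldots,\bfw_h,\bfz_1,\ldots,\bfz_h)=\big(({\rm diag}\,\bfw_1)^{1/2}\bfz_1,\ldots,({\rm diag}\,\bfw_h)^{1/2}\bfz_h\big)$, which is continuous on $[0,\infty)^{2h}\times\R^{2h}$. The argument lies in this set because the volatilities are nonnegative, and so does the limit, since $\bfTh_0\in[0,\infty)^2$ (as the direction of the nonnegative vector $\bfW_0$) and the $\bfPi_t$ have nonnegative entries, whence $\bfPi_t\bfTh_0\ge 0$ and the square roots are well defined. Because $g$ sends the normalised volatilities and innovations to $x^{-1/2}(\bfX_1,\ldots,\bfX_h)$, the limit becomes $\big(V({\rm diag}(\bfPi_1\bfTh_0))^{1/2}\bfZ_1,\ldots,V({\rm diag}(\bfPi_h\bfTh_0))^{1/2}\bfZ_h\big)$ upon setting $V=Y^{1/2}$, for which $\P(V>v)=\P(Y>v^2)=v^{-2\alpha_2}$, $v>1$, and $V$ inherits the independence of $Y$. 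This is precisely \eqref{eq:fidmg2}. The accompanying claim that $\bfX_0$ is regularly varying with index $2\alpha_2$ then follows from Breiman's lemma applied to $\bfX_0=({\rm diag}\,\bfW_0)^{1/2}\bfZ_0$, exactly as in Proposition \ref{prop:spectral1}, using that $\bfW_0$ is regularly varying with index $\alpha_2$ and the moment bounds on $\bfZ$ implicit in \eqref{cond:garchtail}.

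The main obstacle I anticipate is the joint-convergence step: one must carry the innovations $(\bfZ_1,\ldots,\bfZ_h)$ through the limit as extra coordinates while respecting their dependence on the matrix products $\bfPi_{t+1},\ldots,\bfPi_h$. The resolution rests on the time separation noted above — the conditioning event and the limiting size-and-direction pair $(V,\bfTh_0)$ are governed by the strictly past noise $\{\bfZ_s:s\le -1\}$, whereas the whole coupled block $(\bfPi_j,\bfZ_j)$ is governed by $\{\bfZ_s:s\ge 0\}$. Their independence is exactly what allows the magnitude to factor out as the single Pareto variable $V$, independent of everything else, while every internal dependence among directions, products, and innovations passes to the limit unchanged. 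A secondary, routine point is the interchange of the square-root map with the conditional weak limit and the negligibility of $\bfR_t/x$, both handled by the continuous mapping theorem and the Breiman equivalence already in hand from Proposition \ref{prop:fidm2}.
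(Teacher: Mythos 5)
Your proposal is correct and follows essentially the same route as the paper: it transfers Proposition \ref{prop:fidm2} to the \garch\ process through the decomposition $\bfW_t=\bfPi_t\bfW_0+\bfR_t$, negligibility of the remainder, Breiman's lemma, and the continuous mapping theorem applied to the componentwise square-root map, with $V=Y^{1/2}$ giving the index $2\alpha_2$. The only difference is bookkeeping: the paper absorbs the remainder at the level of $\bfX_t$ (via $|(\Sigma_t-\wt\Sigma_t)\bfZ_t|\le|\bfR_t|^{1/2}|\bfZ_t|$ and tail equivalence), whereas you absorb it at the level of $\bfW_t$ inside a joint weak convergence carrying the innovations as extra coordinates — and your explicit appeal to the past/future independence (conditioning event and $(V,\bfTh_0)$ governed by $\{\bfZ_s:s\le -1\}$, the coupled block $(\bfPi_j,\bfZ_j)$ by $\{\bfZ_s:s\ge 0\}$) spells out precisely the step the paper leaves implicit in its phrase ``applying the continuous mapping theorem and another Breiman's lemma.''
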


\begin{proof}
 Write $\Sigma_t=({\rm diag}(\bfW_t))^{1/2}$ so that
 $\bfX_t=\Sigma_t\bfZ_t$. First we approximate $\bfX_t$ by
 $\wt \Sigma_t\bfZ_t$ where
 $\wt \Sigma_t=({\rm diag}(\bfPi_t\bfW_0))^{1/2}$. In view of
 \eqref{eq:vSRE} the triangle inequality yields 
 \[
  |(\Sigma_t-\wt\Sigma_t)\bfZ_t| \le |\bfR_t|^{1/2} |\bfZ_t|. 
 \]
 Since $|\bfR_t|^{1/2} |\bfZ_t|$ has $2\alpha_2$th moment, we have 
 \[
  \lim_{x\to\infty}x^{2\alpha_2}\P \big(|(\Sigma_1\bfZ_1,\ldots,\Sigma_h\bfZ_h)-(\wt
 \Sigma_1\bfZ_1,\ldots,\wt \Sigma_h\bfZ_h)|>x\big)=0,
 \]
 which together with the Breiman's lemma implies that
 $(\bfX_1,\ldots,\bfX_h)$ and $(\wt \Sigma_1\bfZ_1,\ldots,\wt \Sigma_h
 \bfZ_h)$ have the same tail behavior and are regularly varying with
 index $2\alpha_2$. For \eqref{eq:fidmg2} we recall from Proposition
 \ref{prop:fidm2} that 
 $\P(x^{-1}\bfW_0\in \cdot \mid |\bfW_0|>x)\stackrel{w}{\to}\P(V^2
 \bfTh_0\in\cdot)$ with $\P(V^2>x)=x^{-\alpha_2},\,x>1$ and $V^2$
 and $\bfTh_0$ are independent. Due to the multivariate Breiman's lemma,  
 \[
  \P(x^{-1}(\bfPi_1\bfW_0,\ldots,\bfPi_h\bfW_0)\in \cdot \mid
 |\bfW_0|>x)\stackrel{w}{\to} P(V^2 (\bfPi_1,\ldots,\bfPi_h)\bfTh_0\in
 \cdot). 
 \]
 Finally, applying the continuous mapping
 theorem and another Breiman's lemma, we obtain the
 convergence of 
 \[
  \P\big(
 x^{-1/2}( ({\rm diag}(\bfPi_1\bfW_0))^{1/2}\bfZ_1,\ldots,({\rm
 diag}(\bfPi_h\bfW_0))^{1/2}\bfZ_h)\in \cdot \mid |\bfW_0|>x
 \big)
 \]
 to the right hand side of \eqref{eq:fidmg2}. 
\end{proof} 

\noindent {\bf Acknowledgment:} 
This work started when E. Damek and M. Matsui met during the 
workshop 
``Mathematical Foundations of Heavy Tailed Analysis'' 2015 
in Copenhagen. Later on it was continued when M. Matsui was 
visiting Institute of Mathematics of Wroclaw University.  
The authors would like to thank both institutions for their hospitality
and financial support.

{\small
}


\begin{thebibliography}{99}
\baselineskip12pt

\bibitem{alsmeyer:mentmeier2012}
{\sc Alsmeyer, G. and Mentemeier, S.}\ (2012)
Tail behavior of
stationary solutions of random difference equations: the case of
regular matrices. 
 {\em J. Differential Equations} {\bf 18}, 1305--1332.

\bibitem{basrak:davis:mikosch:2002a}
{\sc Basrak, B., Davis, R.A. and Mikosch. T.}\ (2002)
A characterization of multivariate regular variation. {\em
  Ann. Appl. Probab.} {\bf 12}, 908--920.

\bibitem{basrak:davis:mikosch:2002}
{\sc Basrak, B., Davis, R.A. and Mikosch, T.}\ (2002)
Regular variation of GARCH processes. {\em Stochastic Process. Appl.}
{\bf 99}, 95--115.

\bibitem{basrak:segers:2009}
{\sc Basrak, B. and Segers, J.}\ (2009) Regularly varying multivariate time
series.  {\em Stochastic Process. Appl.} {\bf 119}, 1055--1080.

\bibitem{bingham:goldie:teugels:1987}
{\sc Bingham, N.H., Goldie, C.M.\ and Teugels, J.L.}\ (1987)
{\em Regular Variation.} Cambridge University Press, Cambridge (UK).


\bibitem{bollerslev:1990}
{\sc Bollerslev, T.}\ (1990)
Modelling the coherence in short-run nominal exchange rates: a
multivariate 
generalised ARCH model. {\em Review of Economics and Statistics} {\bf
  72}, 498--505.

\bibitem{boman:lindskog:2009}
{\sc Boman, J. and Lindskog, F.}\ (2009)
Support theorems for the Radon transform and Cram\'er-Wold theorems.
{\em J. Theoret. Probab.} {\bf 22}, 683--710.

\bibitem{bougerol:picard:1992a}
{\sc Bougerol, P. and Picard, N. }\ (1992)
Strict stationarity of generalized autoregressive
processes.
{\em Ann. Probab.} {\bf 20}, 1714--1730.

\bibitem{bougerol:picard:1992}
{\sc Bougerol, P. and Picard, N.}\ (1992)
Stationarity of GARCH processes and of some non-negative time series.
{\em J. Econometrics} {\bf 52}, 115--127.

\bibitem{brandt:1986}
{\sc  Brandt, A.} (1986) The stochastic equation
$Y_{n+1}=A_n\,Y_n+B_n$ with stationary coefficients.
{\em Adv. in Appl. Probab.} {\bf 18}, 211--220.

\bibitem{breiman:1965}
{\sc Breiman, L.}\ (1965)
On some limit theorems similar to the arc-sin law. 
{\em Theory Probab. Appl.} {\bf 10}, 323--331.

\bibitem{buraczewski:damek:2010}
{\sc Buraczewski, D. and Damek, E.}\ (2010)
 Regular behavior at infinity of
stationary measures of stochastic recursion on NA groups.
{\em Colloq. Math.} {\bf 118}, 499--523. 

\bibitem{buraczewski:damek:guivarch2009}
{\sc Buraczewski, D., Damek, E., Guivarc'h, Y., Hulanicki, A. and Urban,
	R.}\ (2009) 
Tail-homogeneity of stationary measures for some multidimensional
  stochastic recursions.
{\em Probab. Theory Related Fields}, {\bf 145}, 385--420.

\bibitem{buraczewski:damek:mikosch:2016}
{\sc Buraczewski, D., Damek, E. and Mikosch, T.}\ (2016)
{\em Stochastic Models with Power-Law Tails. The Equation X=AX+B}, Springer Int. Pub., Swizerland. 

\bibitem{damek:zienkiewicz:2017}
{\sc Damek, E. and Zienkiewicz, J.}\ (2017) 
{\em Affine stochastic equation with triangular matrices }, submitted.

\bibitem{davis:hsing:1995}
{\sc Davis, R.A. and Hsing, T.}\ (1995)
Point process and partial sum \con\ for weakly dependent
\rv s with infinite variance. {\em Ann. Probab.} {\bf 23}, 879--917.

\bibitem{davis:mikosch:2009}
{\sc Davis, R.A. and Mikosch, T.}\ (2009)
The extremogram: a correlogram for extreme events. 
{\em Bernoulli} {\bf 15}, 977--1009.

\bibitem{davis:mikosch:basrak:1999}
{\sc Davis, R.A., Mikosch, T. and Basrak, B.}\ (1999)
Sample ACF of multivariate stochastic recurrence equations With
application to GARCH, Preprint. 

\bibitem{davis:mikosch:cribben:2012}
{\sc Davis, R.A., Mikosch, T. and Cribben, I.}\ (2012)
Towards estimating extremal serial dependence via the boostrapped extremogram.
{\em J. Econometrics} {\bf 170}, 142--152.

\bibitem{davis:mikosch:zhao:2013}
{\sc Davis, R.A., Mikosch, T. and Zhao, Y.}\ (2013)
Measures of serial extremal dependence and their estimation. 
{\em Stochastic Process. Appl.} {\bf 123}, 2575--2602.


\bibitem{fernandez:muriel:2009}
{\sc Fern\'andez, B. and Muriel, N.}\ (2009)
Regular variation and related results for the multivariate GARCH(p,q)
model with constant conditional correlations.
{\em J. Multivariate Anal.} {\bf 100}, 1538--1550.


\bibitem{goldie:1991}
{\sc Goldie, C.M.}\ (1991)
Implicit renewal theory and tails of solutions of random equations.
{\em Ann. Appl. Probab.} {\bf 1}, 126--166.

\bibitem{guivarch:lepage:2015}
{\sc Guivarc'h, Y. and Le Page, \'E.} 
Spectral gap properties for linear random walks and Pareto's
	asymptotics for affine stochastic recursions. 
{\em Ann. Inst. Henri Poincar\'e Probab. Stat.} {\bf 52}, 503--574.


\bibitem{jeantheau:1998}
{\sc Jeantheau, T.}\ (1998)
Strong consistency of estimators for multivariate ARCH models. 
{\em Econometric Theory} {\bf 14}, 70--86. 

\bibitem{kesten:1973}
{\sc Kesten, H.}\ (1973)
Random difference equations and renewal theory for
products of random matrices.
{\em Acta Math.} {\bf 131}, 207--248.



\bibitem{matsui:mikosch:2016}
{\sc Matsui, M. and Mikosch, T.}\ (2016)
The extremogram and the cross-extremogram for a bivariate
        \garch\ process. 
{\em Adv. Appl. Probab.} {\bf 48A}, 217--233.

\bibitem{mikosch:starica:2000}
{\sc Mikosch, T. and St\u{a}ric\u{a}, C.}\ (2000)
Limit theory for the sample autocorrelations and
extremes of a GARCH(1,1) process. 
{\em Ann. Statist.} {\bf 28}, 1427--1451.

\bibitem{mikosch:samorodnitsky:tafakori}
{\sc Mikosch, T., Samorodnitsky, G. and Tafakori, L.}\ (2013) 
Fractional moments of solutions to stochastic recurrence equations.
{\em J. Appl. Probab.} {\bf 50}, 969--982.

\bibitem{resnick:1987}
{\sc Resnick, S.I.}\ (1987)
{\em Extreme Values, Regular Variation, and Point Processes.}
Sprin\-ger, New York.
\bibitem{resnick:2007}
{\sc Resnick, S.I.}\ (2007)
{\em Heavy-Tail Phenomena: Probabilistic and Statistical Modeling.}
Springer, New York.
\bibitem{starica:1999}
{\sc St\u{a}ric\u{a}, C. } (1999)
Multivariate extremes for models with constant
conditional correlations. 
{\em Journal of Empirical Finance} {\bf 6}, 515--553.

\end{thebibliography}
\end{document}